\newcommand{\comment}[1]{}
\newcommand\be{\begin{equation}}
\newcommand\ee{\end{equation}}
\newcommand\bea{\begin{eqnarray}}
\newcommand\eea{\end{eqnarray}}
\newcommand\nbea{\begin{eqnarray*}}
\newcommand\neea{\end{eqnarray*}}
\newcommand\bi{\begin{itemize}}
\newcommand\ei{\end{itemize}}
\newcommand\ben{\begin{enumerate}}
\newcommand\een{\end{enumerate}}
\newtheorem{thm}{Theorem}[section]
\newtheorem{cor}[thm]{Corollary}
\newtheorem{lem}[thm]{Lemma}
\newtheorem{rek}[thm]{Remark}
\numberwithin{equation}{section}
\newcommand{\bal}{\begin{align}}
\newcommand{\eal}{\end{align}}
\newcommand{\nn}{\nonumber\\}
\begin{document}

\title[Generalized More Sums Than Differences Sets]{Generalized More Sums Than Differences Sets}

\author{Geoffrey Iyer}\email{geoff.iyer@gmail.com}
\address{Department of Mathematics, University of Michigan, Ann Arbor, MI 48109}

\author{Oleg Lazarev}\email{olazarev@Princeton.edu}
\address{Department of Mathematics, Princeton University, Princeton, NJ 08544}

\author{Steven J. Miller}\email{sjm1@williams.edu, Steven.Miller.MC.96@aya.yale.edu}
\address{Department of Mathematics and Statistics, Williams College,
Williamstown, MA 01267}

\author{Liyang Zhang}\email{lz1@williams.edu}
\address{Department of Mathematics and Statistics, Williams College,
Williamstown, MA 01267}

\subjclass[2010]{11P99 (primary), 11K99 (secondary)}

\keywords{sum-dominant sets, MSTD sets, $k$-generational sum-dominant sets}

\date{\today}

\thanks{We thank the participants of various CANT Conferences (especially Kevin O'Bryant, Greg Martin, Mel Nathanson and Jonathan Sondow), of the 2011 Young Mathematicians Conference at Ohio State, and the SMALL 2011 REU at Williams College for many enlightening conversations, and the reviewer for comments on the manuscript. The first, second and fourth named authors were supported by NSF grants DMS0850577 and Williams College; the third named author was partially supported by NSF grant DMS0970067.}

\begin{abstract} A More Sums Than Differences (MSTD, or sum-dominant) set is a finite set $A\subset \mathbb{Z}$ such that $|A+A|<|A-A|$. Though it was believed that the percentage of subsets of $\{0,\dots,n\}$ that are sum-dominant tends to zero, in 2006 Martin and O'Bryant \cite{MO} proved that a positive percentage are sum-dominant. We generalize their result to the many different ways of taking sums and differences of a set. We prove that $|\epsilon_1A+\cdots+\epsilon_kA|>|\delta_1A+\cdots+\delta_kA|$ a positive percent of the time for all nontrivial choices of $\epsilon_j,\delta_j\in \{-1,1\}$. Previous approaches proved the existence of infinitely many such sets given the existence of one; however, no method existed to construct such a set. We develop a new, explicit construction for one such set, and then extend to a positive percentage of sets.

We extend these results further, finding sets that exhibit different behavior as more sums/differences are taken. For example, we prove that for any $m$, $|\epsilon_1A + \cdots + \epsilon_kA| - |\delta_1A + \cdots + \delta_kA| = m$ a positive percentage of the time. We find the limiting behavior of $kA=A+\cdots+A$ for an arbitrary set $A$ as $k\to\infty$ and an upper bound of $k$ for such behavior to settle down. Finally, we say $A$ is $k$-generational sum-dominant if $A$, $A+A$, $\dots$, $kA$ are all sum-dominant. Numerical searches were unable to find even a 2-generational set (heuristics indicate that the probability is at most $10^{-9}$, and quite likely significantly less). We prove that for any $k$ a positive percentage of sets are $k$-generational, and no set can be $k$-generational for all $k$.
\end{abstract}

\maketitle

\tableofcontents

%%%%%%%%%%%%%%%%%%%%%%%%%%%%%%%%%%%%%%%%%%%%%%%%%%%%%%%%%%%%%%%%%%%%%%%%%%%%%%%%%%
%%%%%%%%%%%%%%%%%%%%%%%%%%%%%%%%%%%%%%%%%%%%%%%%%%%%%%%%%%%%%%%%%%%%%%%%%%%%%%%%%%

\section{Introduction}

Given a finite set of integers $A$, two natural sets to study are \bea A+A & \ = \ & \{a_1 + a_2: a_1, a_2 \in A\} \nonumber\\ A-A &=& \{a_1 - a_2: a_1, a_2 \in A\}.\eea The most natural question to ask is: As we vary $A$ over a family of sets, how often is $|A+A| > |A-A|$ (where $|X|$ is the cardinality of $X$)? We call such sets More Sums Than Differences (MSTD) sets, or sum-dominant (if the two cardinalities are the same we say $A$ is balanced, and if $|A-A|>|A-A|$ we say $A$ is difference-dominant). As addition is commutative but subtraction is not, a typical pair contributes two differences to $A-A$ but only one sum to $A+A$. While there are numerous constructions of such sets and infinite families of such sets \cite{He,HM2,Ma,MOS,Na2,Na3,Na4,Ru1,Ru2,Ru3}, one expects sum-dominant sets to be rare; however, Martin and O'Bryant \cite{MO} proved that a positive percentage of sets are sum-dominant. They showed the percentage is at least $2 \cdot 10^{-7}$, which was improved by Zhao \cite{Zh2} to at least $4.28 \cdot 10^{-4}$ (Monte Carlo simulations suggest the true answer is about $4.5 \cdot 10^{-4}$). In all these arguments, each integer in $\{0,\dots,n-1\}$ has an equal chance of being in $A$ or not being in $A$, and thus all of the $2^n$ subsets are equally likely to be chosen. The situation is dramatically different if we consider a binomial model where the probability parameter tends to zero. Explicitly, for each $n$ let $p(n) \in (0,1)$. Now assume each integer in $\{0,\dots,n-1\}$ is chosen with probability $p(n)$. If $p(n)$ decays to zero with $n$, then Hegarty and Miller \cite{HM1} proved that with probability tending to 1 a randomly chosen set is difference-dominated. See \cite{ILMZ} for a survey of results in the field.\\

\emph{Throughout this paper we use the following notations:}
\begin{itemize}
\item $m\cdot A = \{m\cdot a: \, a\in A\}$. \\
\item $A + B = \{a+b : a \in A, b \in B\}$, $A - B = \{a-b: a \in A, b \in B\}$.\\
\item $|A|$ is the number of elements in $A$.\\
\item $mA = \underbrace{A+\cdots+A}_{m\text{ times}}$ if $m \ge 1$ (if $m=0$ we define $0A$ to be the empty set).\\
\item $-A = \{-a: a \in A\}$, and if $m \ge 0$ then $-mA = -(mA)$; note that if $m, n \ge 0$ then $mA + nA = (m+n)A$; however, $mA - nA \neq (m-n)A$.\\
\item $[a,b] = \{a,a+1,\ldots,b-1,b\}$.
\end{itemize}
\ \\

The purpose of this article is to generalize the positive percentage and explicit constructions of MSTD sets. Two natural questions, which motivated much of this work, are

\begin{enumerate}

\item Given non-negative integers $s_1, d_1, s_2, d_2$ with $s_1+d_1 = s_2+d_2 \ge 2$, can we find a set $A$ with $|s_1A-d_1A| > |s_2A-d_2A|$, and if so, does this occur a positive percentage of the time?

\item We say a set is $k$-generational if $A$, $A+A$, $\dots$, $kA$ are all sum-dominant. Do $k$-generational sets exist, and if so, do they occur a positive percentage of the time? Is there a set that is $k$-generational for all $k$?

\end{enumerate}

The answer to the first question is yes, and in fact the result can be generalized. When $s_1+d_1 = 2$, the only possible sets are essentially $A+A$ and $A-A$, as $-A-A$ is just the negation of $A+A$. When $s_1+d_1=3$, again there are essentially just two possibilities, $A+A+A$ and $A+A-A$, since $A-A-A = -(A+A-A)$ and thus we might as well assume $s_i \ge d_i$. New behavior emerges once the sum is at least 4. In that case, we have $A+A+A+A$, $A+A+A-A$ and $A+A-A-A$. One of our main results is that all possible orderings of these three sets happen a positive percentage of the time. This generalizes and improves results from \cite{MOS}, where large families were found with $|A+A+A| > |A+A-A|$ and large families could be found for more general binary comparisons \emph{if} one such set could be found.

For the second question, brute force numerical explorations could not find such sets. This is not surprising, as such sets are expected to be rare (simple heuristics imply that the percentage of such sets is at most $10^{-9}$, and quite likely much less). Generalizing our construction for the first problem, we find a positive percentage of sets are $k$-generational for any $k$; further, no set can be $k$-generational for all $k$.

We now state our main results and give a sketch of the proofs.

\begin{thm}\label{thm:generalizedMSTD} Let $s_1,d_1,s_2,d_2$ be non-negative integers such that $\{s_1,d_1\} \neq \{s_2,d_2\}$.
\begin{enumerate}
\item There exists a finite, non-empty set $A$ of non-negative integers such that $\left|s_1A-d_1A\right| > \left|s_2A-d_2A\right|$.
\item A positive percentage of finite subsets $A$ of non-negative integers satisfy $\left|s_1A-d_1A\right| > \left|s_2A-d_2A\right|$. Explicitly, there is a constant $c(s_1,d_1,s_2,d_2) > 0$ such that the number of subsets $A$ of $\{0,1,\dots,n-1\}$ satisfying $\left|s_1A-d_1A\right| > \left|s_2A-d_2A\right|$ is at least $c(s_1,d_1,s_2,d_2) 2^n$ as $n\to\infty$.
\end{enumerate}
\end{thm}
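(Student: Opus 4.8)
The plan is to reduce the comparison of cardinalities to a finite, $n$-independent comparison of ``fringe'' data, and then to engineer that data by an explicit choice. Throughout I use that $|sA-dA|$ is unchanged when $A$ is translated (a translate shifts $sA-dA$ rigidly) and when $A$ is negated (which interchanges $s$ and $d$); hence I may assume $\min A = 0$ and $s_i\ge d_i$. Write $T_i=s_i+d_i$, and note that $s_iA-d_iA$ is contained in the interval $[-d_i(n-1),\,s_i(n-1)]$ of length $T_i(n-1)$. Comparing these ambient lengths settles every case with $T_1\neq T_2$: the fringe analysis below shows that a thick $A$ omits only a bounded number of elements from each interval, so for large $n$ the longer interval wins, and the stated inequality $|s_1A-d_1A|>|s_2A-d_2A|$ holds for a positive proportion of $A$ precisely when $T_1>T_2$ (taking, e.g., the all-present fringe and a full middle). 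This leaves $T_1=T_2=:T$ as the heart of the theorem, where the two ambient intervals have equal length and the comparison is decided entirely at the ends.

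For the equal-weight case I would fix a fringe width $\ell$ (a constant depending only on $T$), force $0,n-1\in A$, and record the fringe of $A$ as the pair $(P,Q)$ with $P=A\cap[0,\ell)$ and $Q=\{\,(n-1)-a: a\in A\cap[n-\ell,n)\,\}$, both subsets of $[0,\ell)$ containing $0$. The structural input is a \emph{filling lemma}: for $\ell$ large enough, uniformly in $n$ and conditional on any fixed fringe, the random middle makes $sA-dA$ contain its whole interior $[-d(n-1)+\ell,\ s(n-1)-\ell]$ with probability at least $\tfrac12$, simultaneously for both operations. The mechanism is that a middle value at distance $\delta$ from an endpoint fails to be represented with probability decaying geometrically in $\delta$, so the expected number of interior gaps is $O(r^{\ell})$ for some $r<1$; choosing $\ell$ large and applying a union bound over the two operations gives the claim. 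Granting this, a short count of representations shows that the only possible omissions lie in the two length-$\ell$ fringes, and that the omitted top-fringe values of $sA-dA$ are exactly the reflections of $[0,\ell)\setminus(sQ+dP)$ while the bottom-fringe omissions are the reflections of $[0,\ell)\setminus(dQ+sP)$. Therefore, for a set with fringe $(P,Q)$ and full middle,
\[
|sA-dA| \;=\; T(n-1)+1 \;-\;\bigl(\ell-|(sQ+dP)\cap[0,\ell)|\bigr)\;-\;\bigl(\ell-|(dQ+sP)\cap[0,\ell)|\bigr).
\]

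Subtracting the two instances, every $n$-dependent term cancels and
\[
|s_1A-d_1A|-|s_2A-d_2A| \;=\; F(s_1,d_1)-F(s_2,d_2),\qquad F(s,d):=|(sQ+dP)\cap[0,\ell)|+|(dQ+sP)\cap[0,\ell)|,
\]
a quantity depending only on the chosen fringe and not on $n$. Everything now rests on the crux: \emph{exhibiting} a fringe $(P,Q)$, tailored to the given ordered pair, with $F(s_1,d_1)>F(s_2,d_2)$. I would search for $P,Q$ whose low-order sum/difference structure resolves the bottom block $[0,\ell)$ more completely under the weighting $(s_1,d_1)$ than under $(s_2,d_2)$. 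Two features guide the search and indicate the difficulty: the symmetric choice $P=Q$ forces equality (both $F$'s collapse to $2|(TP)\cap[0,\ell)|$), so the fringe must be genuinely asymmetric; and since interchanging the two operations flips the sign of $F(s_1,d_1)-F(s_2,d_2)$, the construction cannot be made symmetric in the pair and must be tuned to the prescribed direction. This is exactly the step that the earlier, bootstrap-style arguments could not carry out, and I expect it to be the main obstacle; once a single admissible $(P,Q)$ is produced, checking $F(s_1,d_1)>F(s_2,d_2)$ is a finite computation.

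With such a fringe in hand both parts follow at once. The subsets $A\subseteq\{0,\dots,n-1\}$ whose bottom and top $\ell$ bits realize $(P,Q)$ number $2^{n-2\ell}$, and by the filling lemma at least half of them have a full middle for both operations; every one of these satisfies $|s_1A-d_1A|>|s_2A-d_2A|$, giving at least $\tfrac12\cdot2^{-2\ell}\,2^n=c\,2^n$ such sets and hence part (2). Part (1) is the statement that this family is nonempty, which is immediate.
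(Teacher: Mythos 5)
Your overall architecture --- reduce to equal total weight, fix the two fringes, fill the middle randomly, and compare $n$-independent fringe counts --- is essentially the paper's own (its Sections 2 and 3). But there is a genuine gap, and it sits exactly where you yourself flag the crux: you never exhibit a fringe pair $(P,Q)$ with $F(s_1,d_1)>F(s_2,d_2)$; you only describe what it should accomplish, observe that the symmetric choice $P=Q$ fails, and defer the rest to a ``search'' and a ``finite computation.'' That construction is the main content of the theorem, and for \emph{arbitrary} $(s_1,d_1,s_2,d_2)$ it cannot be dispatched by inspection: one needs a uniform family of fringes together with a proof of how their sumsets behave. The paper does this by taking $k=s_1+d_1$, $\ell=2k+1$, $r=2k+2$, $L=[0,\ell]\setminus(\{2\}\cup[\ell-k+1,\ell-1])$, $R=[0,r]\setminus(\{3\}\cup[k+3,2k+1])$, proving by induction (Lemma \ref{lem:xLyRproperties}) that $xL+yR=[0,x\ell+yr]\setminus\left([x\ell+yr-k+1,x\ell+yr-1]\cup\{x\ell+yr-2k+1\}\right)$ for all $x,y$, and then exploiting the one crucial asymmetry: since $r=\ell+1$, the length of $xL+yR$ with $x+y=k$ is $k\ell+y$, so how far each fringe of $s_iA-d_iA$ reaches toward the full middle depends only on how many copies of $R$ it involves. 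The case analysis of Theorem \ref{thm:onegeneralizedMSTDset} (cutting the middle $M$ at exactly the right coordinate) then leaves one missing element for $(s_1,d_1)$ and two for $(s_2,d_2)$, or zero versus one. Nothing in your proposal substitutes for this step, so neither part (1) nor part (2) is actually established.

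Separately, your filling lemma is false as stated. You claim that for $\ell$ large, \emph{conditional on any fixed fringe}, the random middle fills the whole interior $[-d(n-1)+\ell,\,s(n-1)-\ell]$ with probability at least $\tfrac12$, by geometric decay of the failure probability in the distance $\delta$ from the end. Take $s=2$, $d=0$, $P=Q=\{0\}$. For $\ell\le\delta<2\ell$, the element $2(n-1)-\delta$ has exactly one potential representation, namely $(n-1)+(n-1-\delta)$: any other would require some $n-1-j\in A$ with $1\le j<\ell$, which the fringe forbids. So each such element lies in $A+A$ with probability exactly $\tfrac12$, these $\ell$ events are independent (they involve distinct middle elements), and the interior is full with probability $2^{-\ell}$, not at least $\tfrac12$. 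The geometric decay you invoke only begins at distance about $2\ell$, where representations by pairs of independent middle elements appear; near the fringe/middle boundary the fringe itself must supply the representations. This is precisely why the paper's Lemma \ref{lem:pospercent} (following Proposition 8 of Martin and O'Bryant) does not condition on an arbitrary fringe: it builds $B$ whose fringe is the good set $A$ together with a full interval of length $(s_1+d_1)n$ placed between $A$ and the random middle, and its probability bound $1-6\cdot2^{-(s_1+d_1)n+1}$ comes from the length of that interval. Your argument does go through if you pad $(P,Q)$ in the same way --- but even after that repair, the missing fringe construction above remains the unproven heart of the theorem.
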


\begin{rek}\label{rek:sketchmainproof}\emph{Sketch of the proof:} The difficulty is finding one such set; after such a set has been found, we can modify the method of Martin and O'Bryant \cite{MO} to obtain a positive percentage. To create such a set $A$, we decompose $A$ into its left and right parts, denoted $L$ and $R$. We pick $L$ and $R$ to be almost symmetric, but we have $R$ slightly longer than $L$. Next, note that the left (resp. right) fringe of $xA-yA$ is given by $xL-yR$ (resp. $yL-xR$). Because of the near-symmetry of $L$ and $R$, the fringes of $xA-yA$ will have similar structure for different values of $x,y$. However, because $R$ is longer than $L$, the total length of a fringe depends on the number of copies of $R,L$.

\begin{figure}[h!]
\centering
\includegraphics[width=1\textwidth]{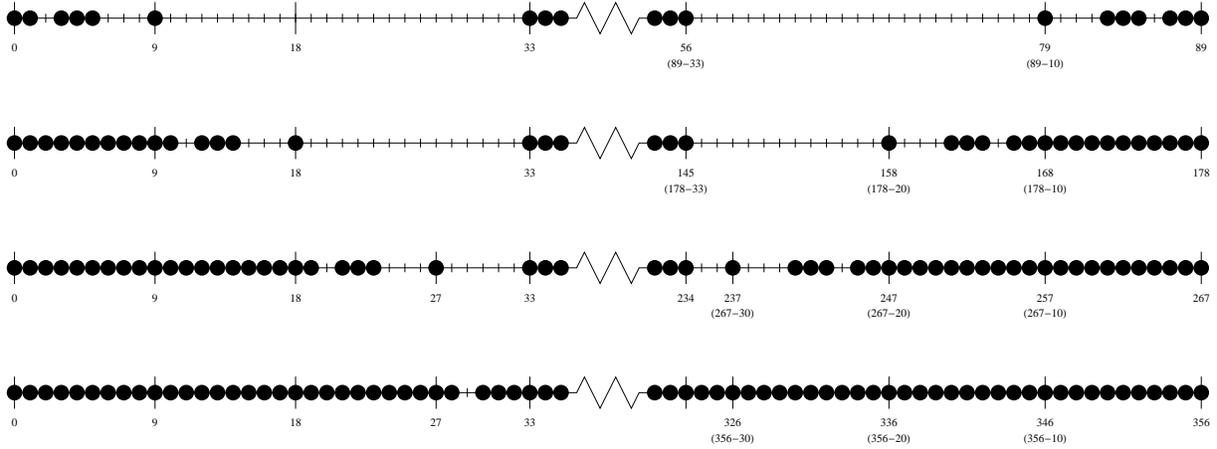}
\caption{\label{fig:set4s} $A$, $A+A$, $A+A+A$, and $A+A+A+A$. The sawtooth means all elements are present in that range.}
\end{figure}

In Figures \ref{fig:set4s} and \ref{fig:set2s2d}, we exhibit a set $A$ where $|2A+2A|>|2A-2A|$. Figure \ref{fig:set4s} shows $A+A+A+A$, while Figure \ref{fig:set2s2d} shows $A+A-A-A$.
Notice that in $A+A+A+A$, the right fringe intersects with the middle, which fills in all the gaps. The left fringe, on the other hand, grows too slowly to completely intersect with the middle, and is left with one gap.

\begin{figure}[h!]
\centering
\includegraphics[width=1\textwidth]{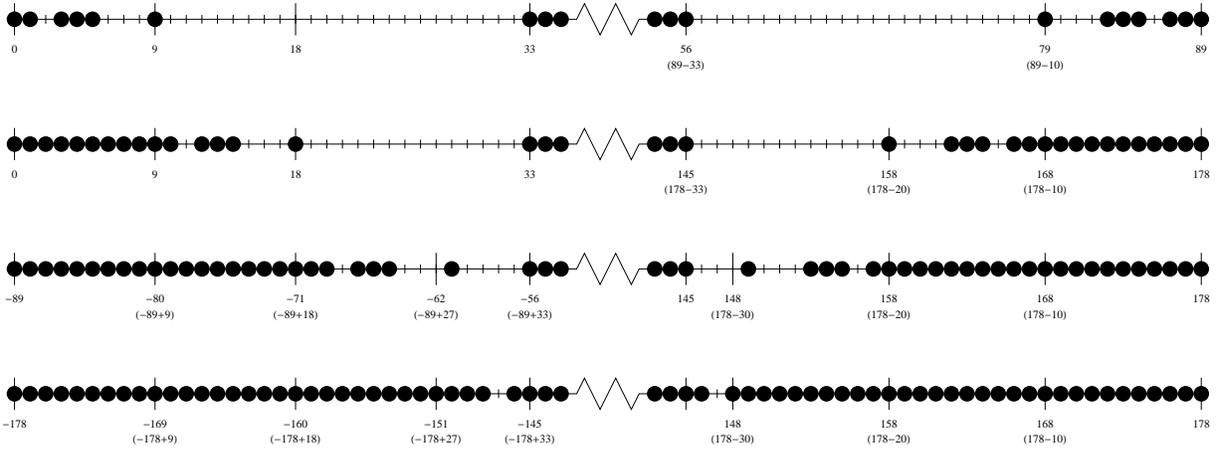}
\caption{\label{fig:set2s2d} $A$, $A+A$, $A+A-A$, and $A+A-A-A$.}
\end{figure}

In $A+A-A-A$, the left fringe is given by $L+L-R-R$, which has a length between $L+L+L+L$ and $R+R+R+R$. This is not quite long enough to intersect with the middle. Similarly, the right fringe is given by $R+R-L-L$, which is once again too short. Therefore $A+A-A-A$ is missing two elements.
\end{rek}

Theorem \ref{thm:generalizedMSTD} can be generalized to obtain the following.

\begin{thm}[Arbitrary Differences]\label{thm:arbitrarydifference}
Let  $a,b,c,d$ be non-negative integers such that $a > b,c,d$ and $a+b = c+d = q$. If $c\ne d$, then for any non-negative integers $m, \ell$ such that $\ell \leq 2m$ and all sufficiently large $n$, there exists $A\subseteq [0,n]$ such that $|aA-bA| = qn+1 - m$ and $|cA-dA|= qn+1-\ell$. If $c=d$, then the statement holds with the additional condition that $\ell$ is even.
\end{thm}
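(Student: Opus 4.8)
The plan is to build $A$ in the same shape used for Theorem~\ref{thm:generalizedMSTD}: write $A = L \cup M \cup R$, where $L$ is a fringe set supported in $[0,k]$ with $0\in L$, $R$ is a fringe set supported in $[n-k,n]$ with $n\in R$, and $M=[k+1,n-k-1]$ is a complete interval filling the center. I would fix $L,R$ (hence $k$) independently of $n$ and let $n\to\infty$; this is what makes the fringe counts below uniform in $n$. Since $A\subseteq[0,n]$ we have $xA-yA\subseteq[-yn,xn]$, an interval of exactly $qn+1$ integers whenever $x+y=q$, so it suffices to count how many elements are \emph{missing}. Writing $\bar R:=\{n-r:r\in R\}$ for the reflection of the right fringe into $[0,k]$, the minimum $-yn$ of $xA-yA$ is attained using $x$ copies of $0\in L$ and $y$ copies of $n\in R$, and an elementary count (as in the sketch following Theorem~\ref{thm:generalizedMSTD}, where the left fringe is $xL-yR$) shows the reachable offsets just above $-yn$ are exactly the sumset $xL+y\bar R$; symmetrically the offsets just below the maximum $xn$ are $x\bar R+yL$. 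Provided $M$ is long enough that the two fringes cannot meet and the full middle absorbs all large offsets, this yields
\begin{equation}
|xA-yA| \ = \ qn+1-g^-(x,y)-g^+(x,y),
\end{equation}
where $g^-(x,y)$ (resp.\ $g^+(x,y)$) is the number of nonnegative integers below a fixed threshold $T$ that are missing from $xL+y\bar R$ (resp.\ $x\bar R+yL$); both depend only on $L,\bar R,x,y$, not on $n$.

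With this reduction the theorem becomes a purely combinatorial problem about finite sets: choose $L,\bar R\subseteq\Z_{\ge0}$ with $0\in L\cap\bar R$ so that
\begin{align}
g^-(a,b)+g^+(a,b) &\ = \ m, \nonumber \\
g^-(c,d)+g^+(c,d) &\ = \ \ell. \nonumber
\end{align}
Here the hypotheses become visible. When $c=d$ the sumsets $cL+d\bar R$ and $c\bar R+dL$ coincide, so $g^-(c,d)=g^+(c,d)$ and $\ell$ is automatically even; this shows the extra parity condition in the $c=d$ case is forced rather than incidental. The hypothesis $a>b,c,d$ will be used to make the $(a,b)$-fringes the most ``sensitive,'' so that a single localized defect can be arranged to cost exactly one element of $aA-bA$.

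I would then realize the target pair $(m,\ell)$ by a gadget construction. Start from the symmetric base $\bar R=L=[0,k]$, i.e.\ $A=[0,n]$, for which every sumset above is a full interval and all four gap counts vanish; this is the $m=\ell=0$ set. Next insert $m$ well-separated ``defect gadgets,'' each a small deletion from the fringes of $A$ engineered so that it removes exactly one reachable offset from the $(a,b)$-fringes (raising $m$ by one) while removing a prescribed number --- $0$, $1$, or $2$ --- of offsets from the $(c,d)$-fringes. Because $a>b,c,d$, a defect placed ``deep'' in the single long $(a,b)$-fringe can be made to surface in either one or both of the shorter, more balanced $(c,d)$-fringes, which produces the three options (only $0$ or $2$ when $c=d$, by the symmetry above). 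Running $m$ such gadgets and choosing the mix of their $\ell$-costs then sweeps $\ell$ through every value in $[0,2m]$ (through the even values when $c=d$), giving precisely the stated range.

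The main obstacle is the gadget step: exhibiting explicit deletions with exact increments $(1,0)$, $(1,1)$, and $(1,2)$ and proving that well-separated gadgets do not interfere. Concretely I expect to have to (i) verify that separated gadgets contribute independently to each $g^\pm$, so the counts simply add; (ii) rule out accidental extra gaps or cancellations once the $a,b,c,d$ copies of $L$ and $\bar R$ are summed; and (iii) pin down the threshold $T$, hence the length of $M$ and the meaning of ``sufficiently large $n$,'' so that the identity for $|xA-yA|$ holds exactly rather than up to error. Establishing the clean $(1,\le 2)$ behavior of a single gadget for all admissible $(a,b)$ and $(c,d)$ with $a>b,c,d$ and $a+b=c+d=q$ is where the real work lies; any subsequent passage from one set to a positive percentage would follow the base-set argument of Martin and O'Bryant exactly as in Theorem~\ref{thm:generalizedMSTD}.
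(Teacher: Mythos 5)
Your reduction is sound as far as it goes, and it matches the paper's framework: the paper also takes $A=L\cup M\cup(n-R)$ with a full middle, reduces $|xA-yA|$ to counting missing elements in the fringe sumsets $xL+yR$, and your observation that $c=d$ forces $\ell$ to be even (because then the two fringes of $cA-dA$ are the same set) is exactly the reason for the parity hypothesis. The problem is that your proposal stops where the content of Theorem \ref{thm:arbitrarydifference} begins. The entire construction is delegated to ``defect gadgets'' with exact increments $(1,0)$, $(1,1)$, $(1,2)$, and you concede yourself that exhibiting these gadgets and proving that separated gadgets do not interfere ``is where the real work lies.'' This is not a routine verification: in a $q$-fold sumset a typical element has many representations, so a local deletion from a fringe generically creates \emph{no} missing element at all; making a defect cost exactly one element of $aA-bA$ requires the kind of rigid structure (isolated points at carefully chosen distances from long blocks) that must be engineered, and distinct defects do interact under $q$-fold addition, so additivity of your counts $g^{\pm}$ is not automatic. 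It is also not established that a $(1,0)$ gadget --- a defect visible in $aA-bA$ yet invisible in \emph{both} fringes of $cA-dA$ --- exists at all; that existence is essentially the $\ell=0$ case of the theorem, so assuming it is close to circular.

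For comparison, the paper's mechanism is global rather than gadget-by-gadget, and this is what makes the counting tractable. It uses a single gap of length $m=\Delta(c-d)$ (with $\Delta=m/(c-d)$) inside a block-structured fringe $L$, and takes $R$ to be $L$ shifted down by $\Delta$ with the front filled in. By the analogue of Lemma \ref{lem:xLyRproperties}, every fringe sumset $xL+yR$ is then a translate of one fixed shape whose unique gap sits at a position that is \emph{linear in $y$}; since $b<d\leq c<a$, the four relevant gaps are linearly ordered relative to the start of the middle. The start of the middle is then a single dial: placing it exactly at the gap of $bL+aR$ gives $|aA-bA|=qn+1-m$, and extending the middle one element at a time sweeps the total deficiency of $cA-dA$ through every value $\ell\leq 2m$ (only even values when $c=d$, by symmetry), with separate bookkeeping for the cases $c-d\leq d-b$ and $c-d>d-b$ and for divisibility of $m$. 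Until you prove your gadget lemma, with non-interference, for all admissible $(a,b,c,d)$, the proposal is a plan rather than a proof.
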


The next theorem constructs chains of Generalized MSTD sets. We start at $k=2$ below as there is essentially only one possibility when $k=1$ (namely the sets $A$ and $-A$, which must have the same cardinality).

\begin{thm}[Chains of Generalized MSTD Sets]\label{thm:xjyjwjzj} Let $x_{j}, y_{j}, w_{j}, z_{j}$ be finite sequences of non-negative integers of length $k$ such that $x_{j}+y_{j}=w_{j}+z_{j}=j$, and $\{x_j,y_j\} \neq \{w_j,z_j\}$ for every $2\leq j\leq k$. A positive percentage of sets $A$ satisfy $\left|x_{j}A-y_{j}A\right|>\left|w_{j}A-z_{j}A\right|$ for every $2\leq j\leq k$.
\end{thm}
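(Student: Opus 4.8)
The plan is to reduce the simultaneous comparison of cardinalities to a purely local comparison of ``fringe gaps,'' and then to obtain the positive percentage by the Martin--O'Bryant averaging argument already used for Theorem \ref{thm:generalizedMSTD}. Normalize $A \subseteq [0,n]$ with $0,n \in A$, and write $A^{*} = n - A$ for the reflection. For non-negative $s,d$ with $s+d = j$ one has $sA - dA \subseteq [-dn, sn]$, an interval of length $jn$, so $|sA-dA| = jn + 1 - (\text{number of gaps})$. Writing $L = A \cap [0,m_0)$ and $R = A \cap (n-m_0,n]$ for the two fringes of a fixed length $m_0$ independent of $n$, a short computation shows that, provided the middle of $A$ is dense enough that $sA-dA$ is gap-free away from its two endpoints, every gap of $sA-dA$ lies in one of the two fringes, and
\begin{equation}
|sA-dA| \ = \ jn + 1 - \bigl(T(s,d) + T(d,s)\bigr),
\end{equation}
where $T(s,d)$ counts the missing small values of the sumset $sR^{*} + dL$, with $R^{*} = n-R$. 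Thus the symmetric quantity $H(\{s,d\}) := T(s,d) + T(d,s)$ depends only on the fixed fringes, and the target inequality $|x_jA-y_jA| > |w_jA-z_jA|$ is equivalent to $H(\{x_j,y_j\}) < H(\{w_j,z_j\})$.

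This splits the theorem into two tasks. \textbf{Task 1 (one good core):} construct a single pair of fringes $L,R$ for which $H(\{x_j,y_j\}) < H(\{w_j,z_j\})$ holds simultaneously for every $2 \le j \le k$. \textbf{Task 2 (positive percentage):} promote one such core to a positive proportion of subsets. For Task 2 I would copy the argument of Theorem \ref{thm:generalizedMSTD}: fix $A$ on the two fringe windows $[0,m_0)$ and $(n-m_0,n]$ to equal $L$ and $R$, insert a fixed family of ``fence'' elements in the middle that forces the middle of $sA-dA$ to be full for every $(s,d)$ with $s+d \le k$, and let the remaining middle coordinates be chosen freely. The number of such subsets is a positive proportion of $2^n$, and by the displayed formula each satisfies all $k-1$ inequalities, yielding the constant $c > 0$.

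The heart of the matter is Task 1, and it is where I expect the main difficulty. For a single level this is essentially Theorem \ref{thm:arbitrarydifference}, which prescribes the gap counts at the level $j=q$: the construction takes $L$ and $R$ nearly symmetric (so that, under exact symmetry, $H(\{s,d\})$ would depend only on $j=s+d$) and then lengthens $R$ relative to $L$, so that the combinations using more copies of the longer part develop a deeper fringe that overlaps the middle and fills in gaps. The obstacle is simultaneity: the fringes are global, so a modification forcing the level-$j$ inequality perturbs $H(\{s,d\})$ at every other level as well. My plan to control this is a scale-separation construction: build $L$ and $R^{*}$ as base-$B$ combinations of constant-size blocks with $B$ much larger than $k m_0$, so that in forming $sR^{*} + dL$ with $s+d \le k$ there are no carries between blocks. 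One then installs, block by block, the local configuration furnished by Theorem \ref{thm:arbitrarydifference} that realizes the desired comparison at the corresponding level, exploiting that $\{x_j,y_j\}$ and $\{w_j,z_j\}$ always have different imbalances $|x_j-y_j| \neq |w_j-z_j|$ (being distinct pairs with equal sum), so each comparison can be won with an arbitrarily small effect on the others.

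The delicate point, and the step I expect to occupy most of the technical work, is that the relevant statistic in $(0.1)$ is an \emph{order} statistic — the number of missing \emph{small} values of $sR^{*}+dL$ — which does not automatically decouple across base-$B$ blocks, since ordering by magnitude interleaves the scales. The crux is therefore to verify that the cross-level perturbations introduced by the block construction are genuinely negligible, i.e. strictly smaller than the gaps $H(\{w_j,z_j\}) - H(\{x_j,y_j\})$ secured at each level, so that all $k-1$ strict inequalities survive for one and the same fringe pair. Once that decoupling estimate is in hand, Tasks 1 and 2 combine to give the asserted positive percentage.
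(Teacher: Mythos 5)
Your reduction to fringe gap counts and your Task 2 (the Martin--O'Bryant averaging) are sound in outline, but Task 1 --- the only hard part --- is left as a conjecture rather than proved, and the route you sketch for it runs into a concrete obstruction. The quantity $H(\{s,d\})$ is an integer, so ``winning each comparison with an arbitrarily small effect on the others'' is not available: a cross-level perturbation must be either exactly zero or strictly dominated by a larger secured margin, and your proposal establishes neither. The single-level constructions you would install block by block secure a margin of exactly $1$ at their designated level (Theorem \ref{thm:onegeneralizedMSTDset} gives $|s_1A-d_1A|=|s_2A-d_2A|+1$), so even a perturbation of $1$ at another level destroys the chain; and nothing in your block installation controls what a level-$j$ block does to the gap counts at levels $j'\neq j$ --- such cross effects are generically of order $1$, not negligible. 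Appealing instead to Theorem \ref{thm:arbitrarydifference} to enlarge the margin does not close the gap either, since that theorem says nothing about the behavior of its sets at levels other than $q$, and the cross effects of those constructions could grow with the margin.

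The paper resolves precisely this point with two ingredients your proposal lacks. First, Lemma \ref{lem:balancedbeforek}: the sets built in Theorem \ref{thm:onegeneralizedMSTDset} from the fringes \eqref{eq:LReq} are \emph{exactly balanced}, $|s_1A-d_1A|=|s_2A-d_2A|$, at every level other than their designated one --- this is the ``zero cross effect'' statement, proved for those explicit fringes, not a genericity estimate. Second, instead of additive fringe-level bookkeeping, the paper glues the sets $A_j$ at separated scales via base expansion, $A=A_1+mA_2+\cdots+m^{k-1}A_k$, and uses Lemma \ref{lem:basexpansionlem} to get \emph{exact multiplicativity} $|sA-dA|=\prod_i |sA_i-dA_i|$ for $s+d\leq k$. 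Multiplicativity plus balancedness makes the comparison at level $j$ depend only on the factor $A_j$: all the other factors are equal on the two sides and cancel. This sidesteps entirely the ``order statistic does not decouple across blocks'' problem that you correctly identify but do not solve --- the decoupling happens at the level of whole-set cardinalities, where it is an easy uniqueness-of-digits argument, rather than at the level of counting missing small elements of fringe sumsets. If you wanted to salvage your route, the missing lemma you would have to prove is exactly an analogue of Lemma \ref{lem:balancedbeforek} for your blocks, at which point you would have reconstructed the paper's proof in additive disguise.
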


\begin{thm}[Simultaneous Comparisons]\label{thm:simulcomparison} Given finite, non-negative sequences of length $n\leq\left\lfloor \frac{k}{2}\right\rfloor +1$
called $s_{j},d_{j}$ such that $s_{j}+d_{j}=k$ for all $1\leq j\leq k$
and $\{s_j,d_j\} \neq  \{s_i,d_i\}$ whenever $j\neq i$,
there exists a set $A$ such that $\left|s_{n}A-d_{n}A\right|>\cdots>\left|s_{1}A-d_{1}A\right|$\end{thm}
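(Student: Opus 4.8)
The plan is to reduce the chain of cardinality inequalities to a design problem for the two \emph{fringes} of $A$, exactly as in the proofs of Theorems \ref{thm:generalizedMSTD} and \ref{thm:arbitrarydifference}, and then build fringes realizing the prescribed order. Concretely, I would look for $A\subseteq[0,N]$ with $0,N\in A$ whose middle is completely filled, say $[w,N-w]\subseteq A$, and write the left fringe $L=A\cap[0,w)$ and the reflected right fringe $R'=N-(A\cap(N-w,N])\subseteq[0,w)$. For $N$ large enough that neither fringe reaches the filled middle, an element at distance $t<w$ from the top $sN$ of $sA-dA$ is present iff $t\in sR'+dL$, and symmetrically at the bottom; hence $|sA-dA|=kN+1-M(\{s,d\})$ with $M(\{s,d\})=f(s,d)+f(d,s)$ and $f(s,d)=\#\big([0,w)\setminus(sR'+dL)\big)$. (Here $f(d,s)$ is the bottom-fringe deficiency, coming from $sL+dR'$; this is the precise form of the fringe heuristic in Remark \ref{rek:sketchmainproof}.) Since $s_j+d_j=k$ is fixed and the pairs $\{s_j,d_j\}$ are distinct, the distinct operations are indexed by $d\in\{0,1,\dots,\lfloor k/2\rfloor\}$, which is why $n\le\lfloor k/2\rfloor+1$; the theorem becomes: choose finite $L,R'\ni 0$ so that the $n$ numbers $M(\{s_j,d_j\})$ are strictly ordered as required.

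The structural observation that drives the construction is that $M$ must come from a \emph{nonlinear} fringe effect. Indeed, if $f(s,d)=\alpha s+\beta d+\gamma$ is affine then $M(\{s,d\})=(\alpha+\beta)k+2\gamma$ is the same for every pair, so the symmetrization $f(s,d)+f(d,s)$ annihilates all linear dependence on the pair. Thus I would construct the fringes from \emph{defect gadgets} whose iterated sumsets accumulate holes superlinearly: for instance a block together with a single far point, whose $s$-fold sumset leaves on the order of $\binom{s}{2}$ gaps, so that such a gadget contributes a term behaving like $\binom{k-d}{2}+\binom{d}{2}$ to $M$ and therefore genuinely separates the pairs. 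To gain enough freedom to hit a prescribed target order, I would place several gadgets at widely separated scales inside $L$ and $R'$; with the scales separated the corresponding fringe deficiencies add, so the attainable profiles $\big(M(\{s_j,d_j\})\big)_j$ are exactly the non-negative combinations of the individual gadget profiles.

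With additivity in hand, the remaining task is to tune the gadgets so the combined profile is strictly decreasing in the prescribed labelling. I would do this inductively, using Theorem \ref{thm:arbitrarydifference} as the two-operation base case (it already yields deficiencies $m$ and any $\ell\le 2m$, with the parity condition when the coefficients coincide), and then insert one operation at a time: at each step a further gadget, at a new and much finer scale, raises the deficiency of the next pair by a controlled amount while perturbing the already-arranged gaps by less than the gaps themselves, preserving their order. Finally I would fix $N$ large (with $w$ the common fringe width) to guarantee fringe--middle separation so that the clean formula for $M$ holds, and check that the $s_j=d_j$ case respects the same parity constraint as in Theorem \ref{thm:arbitrarydifference}.

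The hard part will be the simultaneous, \emph{order-prescribing} control: a single pair of fringes $L,R'$ feeds all $n$ operations at once, and because each $M(\{s,d\})$ is an iterated-sumset hole count the operations are coupled, so raising one deficiency tends to move the others. The crux is therefore to exhibit a family of gadgets whose profiles positively span enough directions to realize \emph{any} permutation of the $\lfloor k/2\rfloor+1$ deficiencies -- not merely the ``convex'' order in which the most unbalanced operation is the most deficient -- and to prove that the scale separation makes the deficiency contributions exactly additive, with no cross-fringe leakage corrupting the counts.
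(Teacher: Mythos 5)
Your framework --- a filled middle, deficiencies $M(\{s,d\})$ counted on the two fringes, pairs indexed by $d\in\{0,\dots,\lfloor k/2\rfloor\}$, and the observation that affine fringe counts are annihilated by the symmetrization $f(s,d)+f(d,s)$ --- is sound and matches the heuristic of Remark \ref{rek:sketchmainproof}. But the proposal stops exactly where the theorem becomes hard, and you say so yourself: you never exhibit a family of gadgets whose deficiency profiles realize an \emph{arbitrary} prescribed order of the $\lfloor k/2\rfloor+1$ pairs, and you never prove the claimed exact additivity of deficiencies for gadgets placed at separated scales inside one pair of fringes. Neither point is routine. A single gadget of the kind you describe yields a profile like $\binom{k-d}{2}+\binom{d}{2}$, which is monotone in the imbalance $|s-d|$, so non-negative combinations of copies of it can only produce orders compatible with that monotonicity; finding profiles that span all permutations \emph{is} the theorem, not a final verification. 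The additivity claim is also suspect as stated: if $L$ is a union of gadgets $G_1\cup(G_2+T)$, then $sL$ contains a copy of $iG_1+jG_2$ for every split $i+j=s$, so the hole count couples the gadgets through all cross terms; when scales are instead separated by \emph{sumset} composition, the correct law is multiplicativity of cardinalities, $|sC-dC|=|sA-dA|\,|sB-dB|$ (Lemma \ref{lem:basexpansionlem}), not additivity of deficiencies.

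The paper resolves both difficulties by a different mechanism, which is worth contrasting with your plan. Instead of one set with finely tuned fringes, it constructs for each pair $\{s_j,d_j\}$ a separate calibrated set $A_j$ (Lemma \ref{lem:superniceset}) with $|s_jA_j-d_jA_j|$ exactly one larger than $|sA_j-dA_j|$ for every other pair with $s+d=k$, all those other values being equal; it then forms $A=A_1+mA_2+m^2A_2+\cdots$ in a base-$m$ expansion with $j$ copies of $A_j$. Multiplicativity gives $|s_jA-d_jA|=\prod_i|s_jA_i-d_jA_i|^{i}$, and the elementary inequality $v^{j}u^{\ell}>v^{\ell}u^{j}$ (with $v=u+1$, $j>\ell$) shows that the \emph{multiplicities} alone enforce the prescribed order: arbitrary permutations are realized not by a richer gadget family but by how many copies of each calibrated set appear. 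This is the idea your plan is missing. To salvage your route you would have to either prove the spanning statement for fringe profiles directly (compare the effort in Lemma \ref{lem:superniceset} just to make one pair exceed the others by one) or replace union-of-gadgets by base expansion --- at which point you would have reconstructed the paper's argument.
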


\begin{rek} The bound $n\leq\left\lfloor \frac{k}{2}\right\rfloor +1$ in the above theorem is completely artificial, as the condition $\{s_j,d_j\} \neq  \{s_i,d_i\}$ is impossible for $n > \left\lfloor \frac{k}{2}\right\rfloor +1$.
\end{rek}

\begin{rek} It is possible to combine Theorems \ref{thm:xjyjwjzj} and \ref{thm:simulcomparison} to obtain a set $A$ that satisfies the criteria in Theorem \ref{thm:simulcomparison} over many iterations of sums/differences.
\end{rek}

From Theorem \ref{thm:xjyjwjzj} we deduce

\begin{cor}[$k$-Generational Sets]\label{cor:kgenerationalexists}\

\begin{enumerate}

\item For each $k$, there exists a $k$-generational set. That
is, for each $k$, there exists a set $A$ such that $\left|cA+cA\right|>\left|cA-cA\right|$
for all $1\leq c\leq k$.

\item For each $k$, a positive percentage of sets are $k$-generational.

\item There is no set which is $k$-generational for all $k$.

\end{enumerate}

\end{cor}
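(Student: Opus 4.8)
The plan is to obtain parts (1) and (2) as immediate specializations of Theorem~\ref{thm:xjyjwjzj}, and to prove part (3) by analyzing the eventual behavior of $cA+cA$ and $cA-cA$ as $c\to\infty$.

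For (1) and (2), observe that ``$cA$ is sum-dominant'' means $|cA+cA|>|cA-cA|$. Since $cA+cA=2cA$ and $cA-cA$ both involve $2c$ copies of $A$ in total, this is exactly the comparison appearing in Theorem~\ref{thm:xjyjwjzj} at index $j=2c$, with $(x_{2c},y_{2c})=(2c,0)$ (no subtracted copies) and $(w_{2c},z_{2c})=(c,c)$. As $c\ge 1$ we have $\{2c,0\}\ne\{c,c\}$, so this is an admissible choice. I would therefore apply Theorem~\ref{thm:xjyjwjzj} with the index $j$ running up to $2k$: for each even $j=2c$ with $1\le c\le k$ prescribe the comparison above, and for each odd $j$ prescribe any admissible dummy comparison, for instance $(x_j,y_j)=(j,0)$ and $(w_j,z_j)=(j-1,1)$, which satisfies $\{j,0\}\ne\{j-1,1\}$ for $j\ge 2$. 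The theorem then yields a positive percentage of sets $A$ satisfying \emph{every} prescribed comparison at once; in particular $|cA+cA|>|cA-cA|$ for all $1\le c\le k$, so these $A$ are precisely the $k$-generational sets. This proves (2), and (1) follows since a positive percentage of sets is in particular nonempty.

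For (3), I would show that for every finite $A$ the set $cA$ fails to be sum-dominant once $c$ is large; part (3) is then immediate, since a set that is $k$-generational for all $k$ would force $cA$ to be sum-dominant for every $c$. After discarding the trivial case $|A|=1$ and rescaling so that $\min A=0$, $\max A=n$, and $\gcd(A)=1$ (translation and dilation change neither $|cA+cA|$ nor $|cA-cA|$), I would invoke the standard eventual-linearity of sumsets: for large $c$, $2cA$ is the interval $[0,2cn]$ with a stable collection of ``fringe gaps'' removed near each end, so $|cA+cA|=|2cA|=2cn+1-G_+$, where $G_+=g_A+g_{n-A}$ and $g_X:=|\,\Z_{\ge 0}\setminus\langle X\rangle\,|$ counts the non-representable non-negative integers of the numerical semigroup generated by $X$. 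Writing $B=A-A$, one has $cA-cA=cB$ with $B$ symmetric and $\max B-\min B=2n$, so the same result gives $|cA-cA|=|cB|=2cn+1-G_-$ with $G_-=2g_{B'}$, where $B'=(A-A)+n$.

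The crux is the inequality $G_-\le G_+$, which I would derive from two containments that hold because $0,n\in A$: namely $A\subseteq (A-A)+n$ (for $a\in A$, $a-n\in A-A$) and $n-A\subseteq (A-A)+n$ (since $-A\subseteq A-A$). Each inclusion of generating sets reverses the inclusion of the corresponding semigroup gap-sets, giving $g_{B'}\le g_A$ and $g_{B'}\le g_{n-A}$, hence $G_-=2g_{B'}\le g_A+g_{n-A}=G_+$. Therefore $|cA-cA|=2cn+1-G_-\ge 2cn+1-G_+=|cA+cA|$ for all large $c$, so $cA$ is not sum-dominant and no $A$ can be $k$-generational for every $k$. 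I expect the main obstacle to be making the stabilization step fully rigorous — in particular, identifying the stabilized fringe-gap count with the semigroup genus $g_X$ and verifying that the left and right fringes of $2cA$ (and of $cB$) separate for large $c$ — whereas the decisive comparison $G_-\le G_+$ is a short consequence of the two containments above.
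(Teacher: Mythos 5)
Your proposal is correct. For parts (1) and (2) you do exactly what the paper does---specialize Theorem \ref{thm:xjyjwjzj}---and you make explicit the admissible choice that the paper leaves implicit: at even indices $j=2c$ take $(x_j,y_j)=(2c,0)$ against $(w_j,z_j)=(c,c)$, with harmless dummy comparisons at the odd indices. For part (3) your strategy coincides with the paper's (Lemmas \ref{lem:behaviorkA} and \ref{cor:|kA-kA|beats|kA+kA|} in \S\ref{sec:kgenerational}): show that for every finite $A$ the iterates $cA$ are eventually not sum-dominant, by stabilizing the fringes and comparing the limiting deficiencies of $cA+cA$ and $cA-cA$. The difference lies in how the decisive comparison is executed. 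The paper argues by direct containment: using the extreme elements $ka_1, ka_m \in kA$, the set $kA-kA$ contains translates of $kA$ at both of its ends, so once fringes stabilize each end of $kA-kA$ contains a copy of the corresponding stabilized fringe of $kA+kA=2kA$, whence $|kA-kA|\ge |kA+kA|$; this route also yields the explicit threshold $N\le 2(a_m-a_1)/s$. You instead identify the limiting deficiencies as numerical-semigroup genera, $G_+=g_A+g_{n-A}$ and $G_-=2g_{B'}$ with $B'=(A-A)+n$, and deduce $G_-\le G_+$ from the containments $A\subseteq B'$ and $n-A\subseteq B'$. Those two containments are precisely the paper's translate containments read off at the level of generating sets, so the underlying mechanism is identical; what your formulation buys is precision---exact values of the deficiency constants and a one-line inequality that does not gloss over the possibility $g_A\neq g_{n-A}$ (the paper's terse claim ``$kA\subset kA-kA$'' is literally true only up to translation, and your double-containment argument repairs that)---while the paper's version buys an explicit bound on when difference-dominance sets in, which the corollary does not actually need. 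Both routes rest on the same stabilization result (the paper's Lemma \ref{lem:behaviorkA}), which you correctly flag as the technical prerequisite and which you would need to apply to $B'$ as well as to $A$.
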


The paper is organized as follows. In \S\ref{sec:proofthmgeneralizedMSTD} we explicitly construct one set with the properties in Theorem \ref{thm:generalizedMSTD}, obtaining only existence and not a positive percentage. For completeness we provide most of the verifications; the reader willing to accept their existence can move on to \S\ref{sec:pospercent}, where we generalize the method of Martin and O'Bryant to improve our results from the existence of one set to a positive percentage, completing the proof of Theorem \ref{thm:generalizedMSTD}. As the proof of arbitrary differences (Theorem \ref{thm:arbitrarydifference}) is not needed for the remaining results and is somewhat long and technical, we give it in Appendix \ref{sec:arbitrarydifference}.

Section \ref{sec:technicalconstructions} contains a few lemmas required to construct the sets in Theorems \ref{thm:xjyjwjzj} and \ref{thm:simulcomparison}. Once again, a reader uninterested in technical constructions may skip this section and proceed to \S\ref{sec:kgenerational}. We discuss $k$-generational sets (and related problems) in \S\ref{sec:kgenerational}, proving Theorem \ref{thm:xjyjwjzj} and Corollary \ref{cor:kgenerationalexists} as well as results about the limiting behavior of $|kA|$ and $|kA-kA|$ as $k$ grows, improving earlier results of Nathanson \cite{Na1}. We conclude in Section \ref{sec:simulcomparison} with a proof of Theorem \ref{thm:simulcomparison}.

%\emph{Note: throughout this paper we use $[0,n-1]$ to denote $\{0,\dots,n-1\}$.}

%%%%%%%%%%%%%%%%%%%%%%%%%%%%%%%%%%%%%%%%%%%%%%%%%%%%%%%%%%%%%%%%%%%%%%%%%%%%%%%%%%
%%%%%%%%%%%%%%%%%%%%%%%%%%%%%%%%%%%%%%%%%%%%%%%%%%%%%%%%%%%%%%%%%%%%%%%%%%%%%%%%%%
\section{Generalized MSTD Sets}\label{sec:proofthmgeneralizedMSTD}

The goal of this section is, given $k\in\mathbb{N}$ and integers with $s_{1}+d_{1}=s_{2}+d_{2}=k$ and $\{s_{1},d_1\}\neq \{s_{2},d_{2}\}$, to explicitly construct a set $A$ such that $\left|s_{1}A-d_{1}A\right|=\left|s_{2}A-d_{2}A\right|+1$. The existence of these sets is the key ingredient in the proof of Theorem \ref{thm:xjyjwjzj}; the proof of that theorem requires us to generalize these constructions slightly, and then modify the arguments of Martin and O'Bryant to obtain a positive percentage by showing a positive percentage of middles may be added to our set. The reader uninterested in the technical construction should skim the sketch of the method in Remark \ref{rek:sketchmainproof} and then continue in \S\ref{sec:pospercent}.

Let $\ell=2k+1$, $r=2k+2$, and consider the sets
\bea\label{eq:LReq}
L &\ = \ & \{0,1,3,4,\ldots,k-1,k,k+1,2k+1\}\nn
 & = & \{0,\ell-2k,l-2k-2,\ell-2k-3,\ldots,\ell-k-1,\ell-k,\ell\}\nn
 & = & [0,\ell]\backslash\left(\{2\}\cup[k+2,2k]\right)\nn
 & =& [0,\ell]\backslash\left(\{2\}\cup[\ell-k+1,\ell-1]\right)\nn
R & = & \{0,1,2,4,5,\ldots,k,k+1,k+2,2k+2\}\nn
 & =& [0,r]\backslash\left(\{3\}\cup[k+3,2k+1]\right).\eea

We begin with a technical lemma. This lemma states that for any $x,y\in\mathbb{N}$,
the basic structure of $xL+yR$ is the same as that of the original
sets. Basically, $xL+yR$ is always missing the first $k$ elements
below the maximum, as well as the singleton element $2k-1$ away from
the maximum. Even more, it is missing no other elements.

\begin{lem}\label{lem:xLyRproperties}
For all $x,y\in\mathbb{N}$, \be xL+yR \ = \ [0,x\ell+yr]\backslash\left([x\ell+yr-k+1,x\ell+yr-1]\cup\{x\ell+yr-2k+1\}\right). \ee
\end{lem}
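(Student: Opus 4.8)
The plan is to induct on $x+y$, taking $(x,y)=(1,0)$ and $(x,y)=(0,1)$ as base cases. Both follow by unwinding the definitions: for $L$ one checks $\ell-k+1=k+2$, $\ell-1=2k$, and $\ell-2k+1=2$, which recovers the stated description, and similarly $r-k+1=k+3$, $r-1=2k+1$, $r-2k+1=3$ recover $R$. For the inductive step it then suffices to show that if $S:=xL+yR$ has the asserted form $[0,M]\setminus([M-k+1,M-1]\cup\{M-2k+1\})$ with $M=x\ell+yr$, then both $S+L$ and $S+R$ again have this form, with $M$ replaced by $M+\ell$ and $M+r$ respectively.

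The key observation is that $L$, $R$, and (by the inductive hypothesis) $S$ all share the \emph{same} structure when measured by distance from the maximum: setting $\Delta:=[1,k-1]\cup\{2k-1\}$, an element at distance $d$ below the maximum is absent from each of these sets precisely when $d\in\Delta$. One verifies this for $R$ exactly as for $L$, since $r-b\in R$ fails iff $b\in[1,k-1]\cup\{2k-1\}$. Writing an element of $S+L$ near the top as $(M+\ell)-j$, it lies in the sumset iff $j=a+b$ for some $a,b\ge 0$ with $M-a\in S$ and $\ell-b\in L$, i.e. with $a,b\in G:=\Z_{\ge 0}\setminus\Delta=\{0\}\cup[k,2k-2]\cup[2k,\infty)$. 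Thus the whole lemma reduces to the numerical statement that $j$ is a sum of two elements of $G$ exactly when $j\notin\Delta$.

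This numerical fact is the identity $G+G=G$. Every nonzero element of $G$ is $\ge k$, so a sum of two positive elements of $G$ is $\ge 2k$ and hence avoids $\Delta\subseteq[1,2k-1]$, while adding $0$ reproduces $G$; conversely no $j\in[1,k-1]$ is such a sum (a single positive summand is already $\ge k>j$), and $2k-1$ is unreachable as well, since two positive summands give $\ge 2k$ and $0+(2k-1)$ fails because $2k-1\notin G$. This simultaneously shows that the distances in $\Delta$ remain genuinely absent from the sumset and that every other distance is realized, producing exactly the claimed top defects $[M+\ell-k+1,M+\ell-1]\cup\{M+\ell-2k+1\}$. The argument for $S+R$ is identical, using $r$ in place of $\ell$ and the same sets $\Delta$ and $G$.

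The main obstacle is the bounded bookkeeping that the clean identity $G+G=G$ conceals: the summands must satisfy $a\le M$ and $b\le\ell$ (or $b\le r$), so for the low range $j\le\ell$ one represents $j$ via $a=0$, whereas for $\ell<j\le M+\ell$ one must split $j$ into two summands both lying in $G$ while keeping $b\le\ell$. The slack provided by the block $[k,2k-2]$ together with the tail $[2k,\infty)$ in $G$ makes this possible, but it requires checking a few boundary ranges (and small $k$) by hand. One must also confirm that the former defects of $S$, namely the point $M-2k+1$ and the block $[M-k+1,M-1]$ now interior to $[0,M+\ell]$, get filled in; this follows because $0,1,3\in L$ permit small shifts of nearby elements of $S$. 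These verifications are routine, but they are where the care lies.
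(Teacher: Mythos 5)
Your proof is correct, and while both arguments are inductions on the number of summands, yours is organized around a genuinely different key step. The paper runs a double induction (first on $x$, then on $y$) from the base case $x=y=1$, and verifies the step $xL+yR\to xL+(y+1)R$ by four separate claims --- the old defects get filled by explicit representatives (e.g.\ $x\ell+yr-2k+1=(x\ell+yr-2k)+1$ with $1\in R$), the new singleton gap persists, the new top block persists, everything else is hit --- and then omits the step in $x$ as ``almost exactly the same.'' You induct on $x+y$ with base cases $L$ and $R$ themselves, so you in fact prove the slightly stronger (and true) statement that the formula holds whenever $x+y\ge 1$, including $x=0$ or $y=0$; and you compress the paper's four claims into the single identity $G+G=G$ for $G=\Z_{\ge 0}\setminus\Delta$, after normalizing by distance from the maximum so that $L$, $R$, and every $xL+yR$ share the identical defect set $\Delta=[1,k-1]\cup\{2k-1\}$. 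That normalization is what lets one argument cover both adding $L$ and adding $R$, and it exposes the structural reason the fringe pattern is stable. The cost is the bounded bookkeeping you flag ($a\le M$, $b\le\ell$ or $r$), and it is indeed routine: the only $j$ not handled by taking $a=0$ (for $j\le\ell$) or $b=\ell$ (for $j>\ell$ with $j-\ell\in G$) are $j\in\ell+\Delta$, which split as $k+(j-k)$, or as $(k+1)+(2k-2)$ when $j=3k-1$, or as $2k+2k$ when $j=4k$, with all parts in $G$ and bounded by $\ell\le M$; this residual check is precisely the content of the paper's Claims 1 and 4. (Your normalized indexing also avoids the paper's off-by-one in Claim 3, where $[x\ell+(y+1)r-k,\,x\ell+(y+1)r-1]$ and ``$1\le a\le k$'' should read $-k+1$ and $1\le a\le k-1$, since distance exactly $k$ below the maximum is present, not absent.)
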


\begin{proof}
The proof is by double induction, first on $x$, then on $y$. As the proof of the base case $x=y=1$ follows by a simple computation, we now assume the result for $xL+yR$ and prove it for $xL+(y+1)R$.

We are interested in
\be \left([0,x\ell+yr] \backslash \left([x\ell+yr-k+1,x\ell+yr-1]\cup\{x\ell+yr-2k+1\}\right)\right)+R.\ee
We prove that this set contains the proper elements in several steps:\\

\noindent\emph{Claim 1:} $[0,x\ell+yr]\subset xL+(y+1)R$.

\noindent\emph{Proof:} Clearly $x\ell+yr-2k+1\in xL+(y+1)R$, since $x\ell+yr-2k\in xL+yR$
and $1\in R$. Furthermore, $[x\ell+yr-k+1,x\ell+yr-1]\subset xL+(y+1)R$, since
$x\ell+yr-k-4\in xL+yR$, $x\ell+yr-k-1\in xL+yR$, and $[4,k+2]\subset R$.\\

\noindent\emph{Claim 2:} $x\ell+(y+1)r-2k+1=x\ell+yr+3\notin xL+(y+1)R$.

\noindent\emph{Proof:} This is equivalent to showing that $x\ell+yr+3-(xL+yR)\cap R=\emptyset$.
This is true as $x\ell+yr+3-(xL+yR)\cap(\mathbb{N}\cup\{0\})=\{3\}$, and $3\notin R$.\\

\noindent\emph{Claim 3:} $[x\ell+(y+1)r-k,x\ell+(y+1)r-1]\cap\left(xL+(y+1)R\right)=\emptyset$.

\noindent\emph{Proof:} This is the same as showing that $[x\ell+yr+k+2,k\ell+yr+2k+1]\cap(xL+(y+1)R)=\emptyset$.
I.e., we want to show that $\max(xL+(y+1)R)-a\notin xL+(y+1)R$ for
every $1\leq a\leq k$. This is true because $\max(xL+yR)-a\notin xL+yR$
and $\max(R)-a\notin R$ for every $1\leq a\leq k$. Therefore the
same will be true of $xL+yR+R=xL+(y+1)R$.\\

\noindent\emph{Claim 4:} All other elements in $[x\ell+yr,x\ell+(y+1)r]$ are in
$xL+(y+1)R$.

\noindent\emph{Proof:} This is true because each of those elements can be written as $x\ell+yr+c$
for some $c\in R$.\\

We have proved the inductive step for $y$; we omit the proof
of the inductive step for $x$, since it is almost exactly the same
as the above proof.
\end{proof}

With the technical lemma proved, we can construct a set as in Theorem \ref{thm:generalizedMSTD}.

\begin{thm}\label{thm:onegeneralizedMSTDset}
Suppose $k\in\mathbb{N}$, and $s_{1}+d_{1}=s_{2}+d_{2}=k$. Further
suppose that $\{s_{1},d_1\}\neq \{s_{2},d_{2}\}$. There exists a set $A$ such
that $\left|s_{1}A-d_{1}A\right|=\left|s_{2}A-d_{2}A\right|+1$.
\end{thm}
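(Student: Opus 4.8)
The plan is to build the set $A$ explicitly from the $L$ and $R$ blocks introduced above, exploiting Lemma~\ref{lem:xLyRproperties} to control the fringes of $s_iA-d_iA$. First I would set $A = L \cup (R + M)$ for a suitably large shift $M$, so that $A$ consists of a left copy of $L$ and a right copy of $R$ separated by a gap large enough that, when we form $s_iA - d_iA$, the left and right fringes never interact with each other. Concretely, for $sA - dA$ with $s+d=k$, the extreme left fringe comes from choosing all $s$ ``sum'' copies from $L$ and all $d$ ``difference'' copies from $R$, giving a translate of $sL - dR$; symmetrically the right fringe is a translate of $sR - dL$. The middle region, where many copies overlap, should be a complete interval of integers, so that the cardinality of $sA-dA$ equals (length of the full interval) minus (number of gaps in the two fringes).

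The key computation is therefore to count the gaps in $sL - dR$ and $sR - dL$ using Lemma~\ref{lem:xLyRproperties}. Since $-dR = d\cdot(-R)$ and $-R$ is (up to reflection) a set of the same block type, I would reduce $sL - dR$ to a translate-and-reflection of an expression of the form $xL + yR$ (or the analogous reflected version) and read off its gap structure from the lemma: such a set is missing exactly a block of $k-1$ consecutive elements just below its maximum together with one isolated element $2k-1$ below the maximum, i.e.\ exactly $k$ gaps, \emph{provided} the fringe is long enough to reach into but not past the middle. The next step is to verify the geometric condition that each fringe is ``long enough'' to keep its gaps but ``short enough'' not to be filled in by the middle block; this is where the asymmetry $r = \ell+1$ (i.e.\ $R$ being one longer than $L$) enters. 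Because the total length of a fringe is $s\ell + dr = k\ell + d$, it depends on $d$ through the extra $+d$, so fringes with more copies of $R$ are strictly longer. I then choose $M$ so that for the particular pairs $(s_1,d_1)$ and $(s_2,d_2)$ the fringes land in the correct regime, and compare the two gap counts.

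The heart of the argument, and the main obstacle, is showing that the two sets $s_1A-d_1A$ and $s_2A-d_2A$ have gap counts differing by exactly one. Both the full intervals $[\,\cdot\,,\,\cdot\,]$ spanned by $s_iA-d_iA$ have the same length (since $s_i+d_i=k$ is fixed, the diameter $s_i\cdot\mathrm{diam}(A) + d_i\cdot\mathrm{diam}(A)$ is the same for both), so $|s_1A-d_1A| - |s_2A-d_2A|$ equals the difference in the numbers of gaps. The delicate point is that a fringe keeps its full complement of gaps only while it has not yet collided with the middle, and the moment of collision is governed by whether the fringe length $k\ell + d_i$ exceeds the distance to the middle block. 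By tuning the near-symmetry of $L$ and $R$ and the separation $M$, I would arrange that exactly one of the four fringes (two for each comparison) gets its lone gap filled in, while all others retain theirs; this produces the desired difference of $1$. The bookkeeping is intricate because one must track which of $L$ or $R$ dominates each fringe and handle the boundary case $d_i = s_i$ (only possible when $k$ is even) where the two fringes of a single set coincide in structure, so I would treat the cases $s_i > d_i$ and $s_i = d_i$ separately, using the condition $\{s_1,d_1\}\neq\{s_2,d_2\}$ to guarantee the counts cannot both be equal.
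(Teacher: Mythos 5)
Your overall strategy (fringes whose gap structure is read off from Lemma \ref{lem:xLyRproperties}, a full middle, and a count of missing elements) is the paper's strategy, but your construction breaks it in two places, and both are fatal as written. First, you take the right-hand block of $A$ to be a \emph{translate} $R+M$ of $R$, whereas the paper's construction uses a \emph{reflected} copy $n-R$. This matters: with $A=L\cup(R+M)$, the extreme left fringe of $sA-dA$ is, as you say, a translate of the mixed expression $sL-dR$, and this can never be rewritten, up to translation and reflection, as a pure sum $xL+yR$: negating it gives $dR-sL$, which is still mixed. So Lemma \ref{lem:xLyRproperties} does not apply to your fringes, and its conclusion is in fact false for them. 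Concretely, for $k=2$ one has $L=\{0,1,3,5\}$ and $R=\{0,1,2,4,6\}$, and $L-R=[-6,5]$ is a \emph{complete} interval, whereas $L+R=[0,11]\setminus\{8,10\}$ has the two gaps the lemma predicts. Mixed fringes tend to fill in, which is exactly the wrong behavior for producing missing elements. It is the reflection of $R$ that makes every extreme fringe of $s_iA-d_iA$ a pure sum, e.g.\ $sL-d(n-R)=sL+dR-dn$ and $s(n-R)-dL=sn-(sR+dL)$, so that the lemma can be quoted.

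Second, your $A$ has no full middle. You want $M$ large enough that the two fringes ``never interact,'' but then $sA-dA$ is a union of at most $s+d+1$ small clusters (one for each possible coefficient of $M$), each of diameter at most about $kr$, separated by gaps of order $M$; nothing fills in between them, and your claim that ``the middle region, where many copies overlap, should be a complete interval'' is false. The two requirements --- a separation so large the fringes cannot interact, and a middle that fills itself in --- are contradictory when $A$ consists of just two blocks of bounded size. The paper resolves this by taking $A=L\cup M\cup(n-R)$ with $M$ an explicit long interval of consecutive integers whose length grows with $n$; all the terms involving $M$ then fill everything between the two extreme fringes, and the bookkeeping you describe (full span minus fringe gaps) becomes valid. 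Both defects are repaired by adopting that three-block, reflected construction; the rest of your outline --- the fringe length $s\ell+dr=k\ell+d$, the case analysis over which fringe's gap is absorbed by the middle, and the separate treatment of $s_i=d_i$ --- then lines up with the paper's proof.
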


For example, the set $$A=\{0,1,3,4,5,9,33,34,35,50,54,55,56,58,59,60\}$$ has the property that $$\left|A+A+A+A\right| > \left|A+A+A-A\right|.$$

\begin{proof} Because $\left|xA-yA\right|=\left|yA-xA\right|$, we can assume that
$s_{1}\geq d_{1}$ and $s_{2}\geq d_{2}$. Therefore we have either
$s_{1}>s_{2}\geq d_{2}>d_{1}$, or $s_{2}>s_{1}\geq d_{1}>d_{2}$.
We first treat the case when $s_{1}>s_{2}$.\\

\noindent \emph{Case 1: $s_1 > s_2$:} Take $L,R,\ell,r$ as in construction from Lemma \ref{lem:xLyRproperties}, and choose $n>4(kr-2k+1)$. Define \bea
M & \ = \  & [kr-2k+1-d_{1},kn-(kr-2k+1-d_{1})] \nonumber\\
A & =  & L\cup M\cup(n-R). \eea

To prove this, we first show that the middle of $s_1A-d_1A$ is full, and then we examine the fringes. We have $[(kr-2k+1-d_1)-d_1n,s_1n-(kr-2k+1-d_1)]\subset s_1A-d_1A$. To prove this, note
that $M$ is sufficiently large such that $(M+L)\cup(M+n-R)$ is the
entire interval $[\min(M),n+\max(M)]$. Therefore, $[\min(M),n+\max(M)]\subset A+A$. Similarly, we get that $[\min(M)-n,\max(M)]\subset A-A$. The same idea shows that $(M+L)\cup(M+n-R)$
is sufficiently large such that \[
[\min(M),2n+\max(M)]\subset\left(M+L+L\right)\cup\left(M+M+L\right)\cup\cdots\cup\left(M+n-R+n-R\right).\]
By induction, $s_1A-d_1A$ will contain $[\min(M),(k-1)n+\max(M)]$.

We first look at the left fringe of $s_{1}A-d_{1}A$, this is (up
to translation) $sL+dR\cap[0,kr-2k-d_{1}]$. Note that $kr-2k-d_{1}=s_{1}\ell+d_{1}r-2k-d_{1}+s_{1}$.
Therefore, using Lemma \ref{lem:xLyRproperties}, we get that \be s_{1}L+d_{1}R\cap[0,s_{1}\ell+d_{1}r-2k-d_{1}+s_{1}] \  = \ [0,kr-2k-d_{1}]\backslash\{s_{1}\ell+d_{1}r-2k+1\} \ee (this is because $s_{1}>d_{1}$). Next we look at the right fringe.
This is (up to translation and a minus sign) $d_{1}L+s_{1}R\cap[0,kr-2k-d_{1}]$,
which is the same as \be d_{1}L+s_{1}R\cap[0,d_{1}\ell+s_{1}r-2k-d_{1}+d_{1}] \ = \ d_{1}L+s_{1}R\cap[0,d_{1}\ell+s_{1}r-2k].\ee
Using Lemma \ref{lem:xLyRproperties}, this is just $[0,d_{1}\ell+s_{1}r-2k]$ (i.e.,
the entire interval). Therefore $s_{1}A-d_{1}A$ is missing one element.

Next, we look at the left fringe of $s_{2}A-d_{2}A$. Once again,
this is (up to translation) $s_{2}L+d_{2}R\cap[0,kr-sk+1-d_{1}]$.
This can be rewritten as $s_{2}L+d_{2}R\cap[0,s_{2}\ell+d_{2}r-2k-d_{1}+s_{2}]$.
Since we have that $s_{2}>d_{1}$, we get \be
s_{2}L+d_{2}R\cap[0,s_{2}\ell+d_{2}r-2k-d_{1}+s_{2}] \ = \ [0,s_{2}\ell+d_{2}r-2k-d_{1}+s_{2}]\backslash\{s_{2}\ell+d_{2}r-2k+1\}.\ee
Therefore the left fringe is missing one element. Now we look at the
right fringe. This is (up to translation and a minus sign) $d_{2}L+s_{2}R\cap[0,kr-2k+1-d_{1}]$.
This is the same as $d_{2}L+s_{2}R\cap[0,d_{2}\ell+s_{2}r-2k+1-d_{1}+d_{2}]$.
Now, because we have $d_{2}>d_{1}$, this intersection is $[0,d_{2}\ell+s_{2}r-2k+1-d_{1}+d_{2}]\backslash\{d_{2}\ell+s_{2}r-2k+1\}$.
Therefore, the right fringe is missing one element. This means that
$kn=\left|s_{1}A-d_{1}A\right|>\left|s_{2}-d_{2}A\right|=kn-1$.\\

\noindent \emph{Case 2: $s_2 > s_1$:} As $s_2 > s_1$ we have $d_1 > d_2$. Define
\bea
M & \ =  \  & [kr-2k+1-s_{1},n-(kr-2k+1-s_{1})]\nonumber\\
A&=&L\cup M\cup(n-R).\eea
We have \bea
s_{1}L+d_{1}R\cap[0,kr-2k+1-s_{1}] &\ =\ & s_{1}L+d_{1}R\cap[0,s_{1}\ell+d_{1}r-2k+1+s_{1}-s_{1}]\nn
 &\ =\ & [0,s_{1}\ell+d_{1}r-2k+1],\eea so the left fringe is missing no elements. Furthermore
\bea
d_{1}L+s_{1}R\cap[0,kr-2k+1-s_{1}] &\ =\ & d_{1}L+s_{1}R\cap[0,d_{1}\ell+s_{1}r-2k+1-s_{1}+d_{1}]\nn
 &\ =\ & [0,d_{1}\ell+s_{1}r-2k+1-s_{1}+d_{1}]. \eea The last step is true because $s_{1}\geq d_{1}$. Therefore, $s_{1}A-d_{1}A$
misses no elements.

Next, we look at:\bea
s_{2}L+d_{2}R\cap[0,kr-2k+1-s_{1}] &\ =\ & s_{2}L+d_{2}R\cap[0,s_{2}\ell+d_{2}r-2k+1-s_{1}+s_{2}]\nn
 & = & [0,s_{2}\ell+d_{2}r-2k+1-s_{1}+s_{2}]\backslash\{s_{2}\ell+d_{2}r-2k+1\}, \nonumber\\ \eea
which is true because $s_{2}>s_{1}$. This is enough to show that $\left|s_{1}A-d_{1}A\right|>\left|s_{2}A-d_{2}A\right|$,
but we will go slightly further and show that $\left|s_{1}A-d_{1}A\right|=\left|s_{2}A-d_{2}A\right|+1$.
To do this, we look at the right fringe of $s_{2}A-d_{2}A$. As $s_1 > d_2$, we have\bea
d_{2}L+s_{2}R\cap[0,kr-2k+1-s_{1}] &\ = \ & d_{2}L+s_{2}R\cap[0,d_{2}\ell+s_{2}r-2k+1-s_{1}+d_{2}]\nn
 & = &[0,d_{2}\ell+s_{2}r-2k+1-s_{1}+d_{2}], \eea which completes the proof. \end{proof}

Although it doesn't matter for our current purposes, the following lemma will be important later. Each of the sets constructed above is sum-difference
balanced both before and after the critical point. More formally,
we have the following.

\begin{lem}\label{lem:balancedbeforek}
In all the sets $A$ defined in the proof of Theorem \ref{thm:onegeneralizedMSTDset}, \be \left|s_{1}A-d_{1}A\right|\ = \ \left|s_{2}A-d_{2}A\right|\ee
for any $s_{1}+d_{1}=s_{2}+d_{2}$ such that $s_{1}+d_{1}\neq k$.\end{lem}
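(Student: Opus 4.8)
The plan is to recycle the fringe decomposition already used in the proof of Theorem~\ref{thm:onegeneralizedMSTDset}. Fix $s_1+d_1=s_2+d_2=K\neq k$; since $|xA-yA|=|yA-xA|$ it suffices to understand $|xA-yA|$ for an arbitrary split $x+y=K$. As in that proof, $M$ is large enough that $xA-yA$ is a single full middle interval together with a left fringe which is a translate of $xL+yR$ and a right fringe which is a reflected translate of $xR+yL$. Hence
\[
|xA-yA|\ =\ Kn+1-\mu(xL+yR)-\mu(xR+yL),
\]
where $\mu(F)$ counts the elements the fringe $F$ fails to contribute: its internal gaps lying below the middle boundary $\min M$, together with (if the fringe is too short to reach the middle) the empty stretch separating it from the middle.

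By Lemma~\ref{lem:xLyRproperties}, each fringe $xL+yR$ is a full interval except for a block of $k-1$ consecutive gaps immediately below its top and one further isolated gap, and its top is $x\ell+yr=K\ell+y$; likewise $xR+yL$ has top $xr+y\ell=K\ell+x$. Writing $\psi(T)$ for the number of missing contributions of a fringe with top $T$, measured against the fixed boundary $\min M=kr-2k+1-d_1$, I would check directly that $\psi$ is affine of slope $-1$ for $T<\min M$, is identically zero once $T$ exceeds $\min M$ by more than $2k$, and departs from linearity only on a transition window of width about $2k$ around $\min M$; the departure there is precisely the effect of the middle slicing through the block of $k-1$ gaps and the isolated gap.

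The crucial structural point is that the two fringe tops satisfy $(K\ell+y)+(K\ell+x)=2K\ell+K$, which depends only on $K$ and not on the split. Thus the total deficiency $\mu(xL+yR)+\mu(xR+yL)=\psi(K\ell+y)+\psi(K\ell+x)$ is a sum of two values of $\psi$ whose arguments have a \emph{fixed} sum, and since $\psi$ is affine of slope $-1$ on each side of its transition window, such a sum is automatically split-independent provided both arguments lie on the same side. For $K>k$ both tops exceed $\min M+2k$, so $\psi$ vanishes at each and $xA-yA=[-yn,xn]$ is full for every split; for $K<k$ both tops lie below the window, so $\psi$ is affine at each and the sum telescopes to a quantity depending on $K$ alone. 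Either way $(s_1,d_1)$ and $(s_2,d_2)$ give equal cardinalities, which is the asserted balance. Only at $K=k$ do the tops straddle the window, and there the asymmetric slicing of the gap clusters produces the off-by-one recorded in Theorem~\ref{thm:onegeneralizedMSTDset}.

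The step I expect to be the main obstacle is the transition-window bookkeeping: confirming, from the explicit values $\ell=2k+1$, $r=2k+2$ and $\min M=kr-2k+1-d_1$, that the hypothesis $K\neq k$ really does keep both tops $K\ell+x$ and $K\ell+y$ strictly on one side of the window for every admissible split. This is exactly where the tuning of $\min M$ to the critical value $k$ is used, and the extreme splits—where one fringe top is as large as $Kr$—demand the most care, since there the larger top is closest to the window and one must verify it cannot poke in.
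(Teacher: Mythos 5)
Your outline reproduces the paper's own strategy (full middle plus two fringes governed by Lemma~\ref{lem:xLyRproperties}, with the split-independent sum of fringe tops doing the bookkeeping), but the step you defer to the end---checking that $K\neq k$ keeps every admissible split's tops out of the transition window---is not a formality: it is false, and with it the lemma itself fails for large $k$. Take the Case 1 construction of Theorem~\ref{thm:onegeneralizedMSTDset} with $k=8$, $(s_1,d_1)=(5,3)$, $(s_2,d_2)=(4,4)$, so $\ell=17$, $r=18$ and $\min(M)=kr-2k+1-d_1=126$. At level $K=k-1=7$, the split $(x,y)=(7,0)$ has fringe tops $K\ell+y=119$ and $K\ell+x=126=\min(M)$: the second top lands exactly on the window. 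Counting via Lemma~\ref{lem:xLyRproperties}, $7L=[0,119]\setminus([112,118]\cup\{104\})$ misses $14$ elements below $126$ (its $8$ internal gaps plus the stretch $[120,125]$), while $7R=[0,126]\setminus([119,125]\cup\{111\})$ misses $8$, for a total deficiency of $22$; but the split $(4,3)$ has tops $122$ and $123$, both below $\min(M)$, with deficiencies $11+10=21$. Hence $|7A|=7n-21$ while $|4A-3A|=7n-20$, contradicting the asserted balance at $K=7\neq k$.

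The structural reason is one you half-identify but then wave away: $\psi$ is not affine of slope $-1$ across the boundary, since $\psi(\min(M)-1)=\psi(\min(M))=k$ (at $T=\min(M)$ the lost stretch disappears but all $k$ internal gaps still lie below the middle), so a top landing on this plateau gains one deficiency relative to the affine extrapolation. Your telescoping is sound whenever both tops stay strictly below $\min(M)$, which does hold for every split when $K\le k-2$ (the largest possible top is $(k-2)r<\min(M)$) and trivially when $K>k$ (all $\psi$-values vanish); but at $K=k-1$ a lopsided split has top $K\ell+\max(x,y)$, which reaches $\min(M)$ exactly when $\max(x,y)\ge k+2-d_1$, i.e.\ whenever $d_1\ge 3$ (similarly $s_1\ge 3$ in Case 2). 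You should be aware that the paper's proof commits the same error---it asserts without verification that ``when $s_1+d_1<k$ these fringes do not intersect the middle''---so the obstacle you flagged is a genuine defect in the lemma as stated, not merely a hole in your write-up: no completion of this argument (or any other) can prove the statement for all $K\neq k$, and the claim must either be restricted (e.g.\ to $K\le k-2$, or to constructions with $d_1\le 2$) or the fringes $L,R$ redesigned so that the level-$(k-1)$ tops clear the window.
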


\begin{proof}
In every one of the constructions above, $sA-dA$ contains all possible
numbers whenever $s+d>k$, so it only remains to show this fact when
$s+d<k$.

This essentially follows from Lemma \ref{lem:xLyRproperties}. Both $s_{1}A-d_{1}A$
and $s_{2}A-d_{2}A$ contain the same middle (up to translation),
so it is enough to analyze the fringes. When $s_{1}+d_{1}<k$,
these fringes do not intersect the middle, so it suffices
to show that \be \left|s_{1}L+d_{1}R\right|+\left|d_{1}L+s_{1}R\right|\ = \ \left|s_{2}L+d_{2}R\right|+\left|d_{2}L+s_{2}R\right|.
\ee Using Lemma \ref{lem:xLyRproperties}, we know that $\left|sL+dR\right|=s\ell+dr-k$.
Therefore, it is enough to show that \be s_{1}\ell+d_{1}r-k+d_{1}\ell+s_{1}r-k \  = \ s_{2}\ell+d_{1}r-k+d_{2}\ell+s_{2}r-k.\ee
This equation is the same as \be (s_{1}+d_{1})(\ell+r)-2k \ = \ (s_{2}+d_{2})(\ell+r)-2k; \ee as $s_1+d_1 = s_2+d_2$, the above is true, which completes the proof.
\end{proof}

%%%%%%%%%%%%%%%%%%%%%%%%%%%%%%%%%%%%%%%%%%%%%%%%%%%%%%%%%%%%%%%%%%%%%%%%%%%%%%%%%%
%%%%%%%%%%%%%%%%%%%%%%%%%%%%%%%%%%%%%%%%%%%%%%%%%%%%%%%%%%%%%%%%%%%%%%%%%%%%%%%%%%
\section{Positive Percentages}\label{sec:pospercent}

We now give a proof of Part 2 of Theorem \ref{thm:generalizedMSTD}.

\begin{lem}\label{lem:pospercent}
Suppose there exists a finite set $A\subseteq\mathbb{Z}$ such that $\left|s_{1}A-d_{1}A\right|>\left|s_{2}A-d_{2}A\right|$, where $s_{1}+d_{1}=s_{2}+d_{2}$. Further suppose that $s_{1}\geq2$. Then \be \liminf_{n\to\infty} \frac{\#\{B\subseteq [0,n-1] ; \, \left|s_1B-d_1B\right|>\left|s_2B-d_2B\right|\}}{2^n} \ > \ 0; \ee in other words, a positive percentage of subsets have this structure.
\end{lem}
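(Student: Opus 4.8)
The plan is to follow the Martin--O'Bryant template: show that once a single ``witness'' set $A_0$ exists with $|s_1A_0-d_1A_0|>|s_2A_0-d_2A_0|$, we can grow it into a positive-probability family by appending a random ``middle.'' Fix the witness $A_0\subseteq[0,w]$ supplied by Theorem \ref{thm:onegeneralizedMSTDset}, writing it as $L_0\cup M_0\cup R_0$ where $L_0$ is a fixed left fringe of length $w_L$ and $R_0$ a fixed right fringe of length $w_R$. For each large $n$, I would consider sets of the form
\be
B \ = \ L_0 \ \cup \ M \ \cup \ (n-R_0),
\ee
where the left fringe $L_0$ and right fringe $n-R_0$ are held \emph{fixed} (copied from the witness), and $M$ ranges over all subsets of the central region $[w_L,\,n-w_R]$ subject to one mild constraint described below. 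Since the two fringes are fixed and only $M$ varies, the number of such $B$ is essentially $2^{n-w_L-w_R}$, a fixed positive fraction of $2^n$. The goal is to show that the fringe data alone forces $|s_1B-d_1B|>|s_2B-d_2B|$ for \emph{every} such $B$, so that all of them count toward the positive percentage.

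The key structural point is the fringe/middle decomposition for $s$-fold sums and differences. First I would show that, provided $M$ is ``fat enough'' so that the central contributions $s_iM-d_iM$ together with their interactions with $L_0$ and $n-R_0$ fill the entire interval $[\min, \max]$ away from the two ends, the quantity $|s_iB-d_iB|$ equals
\be
(s_i+d_i)\,n \ + \ 1 \ - \ (\text{left deficiency}) \ - \ (\text{right deficiency}),
\ee
and that each deficiency is determined solely by the fixed fringes $s_iL_0+d_iR_0$ and $d_iL_0+s_iR_0$ via Lemma \ref{lem:xLyRproperties}. In other words, the ``middle is full'' argument from the proof of Theorem \ref{thm:onegeneralizedMSTDset} goes through verbatim as long as $M$ meets a fixed density/gap requirement. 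The deficiencies are exactly the quantities computed there, so the strict inequality $|s_1A_0-d_1A_0|>|s_2A_0-d_2A_0|$ transfers unchanged to every admissible $B$. I would need the hypothesis $s_1\ge 2$ precisely to guarantee that the highest sum-count $s_1$ produces a long enough fringe region for this bookkeeping to separate the two cases.

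The remaining task is to show that a positive fraction of choices of $M$ are ``fat enough.'' Here I would argue that the constraint is a \emph{local} one: for the middle to fill completely, it suffices that $M$ contains at least one element in each window of some fixed length $C=C(s_1,d_1,s_2,d_2)$ throughout $[w_L,n-w_R]$. By a standard second-moment or direct complementary-counting estimate (the probability that a random subset of $[0,n]$ has a gap of length $\ge C$ decays geometrically, so the expected number of such long gaps is $O(n\,2^{-C})$), one shows that the proportion of $M$ failing the density condition is bounded away from $1$ for a suitable fixed $C$; equivalently a positive proportion of $M$ satisfy it. Multiplying by the fixed factor $2^{-(w_L+w_R)}$ for the frozen fringes yields a positive liminf. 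The main obstacle I anticipate is making the ``middle is full'' step fully rigorous for iterated sumsets and differences $s_iB-d_iB$ with $s_i,d_i\ge 1$: one must verify that interactions like $M+L_0+L_0$, $M+M+L_0$, $\dots$, $M+(n-R_0)+(n-R_0)$ genuinely tile the interval without gaps once $M$ is locally dense, and that this density threshold $C$ can be chosen uniformly in $n$. Once that uniform gap bound is in hand, the counting is routine and the strict inequality is inherited directly from the witness set.
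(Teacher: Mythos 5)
Your overall template---freeze fringes containing a copy of the witness set, let the middle vary, show the middle of the iterated sumset fills itself, and inherit the strict inequality from the frozen fringes---is the same as the paper's. But your execution has a fatal gap at the counting step. You restrict the random middle $M$ to sets having at least one element in every window of some \emph{fixed} length $C$, and claim a positive proportion of subsets of $[w_L, n-w_R]$ satisfy this. That is false: partitioning the central region into $\lfloor (n-w_L-w_R)/C \rfloor$ disjoint windows of length $C$, the probability that none is empty is at most $(1-2^{-C})^{\lfloor (n-w_L-w_R)/C \rfloor}$, which tends to $0$ exponentially as $n\to\infty$ for any fixed $C$. Your own estimate already shows this: the expected number of long gaps, $O(n\,2^{-C})$, diverges for fixed $C$, and is small only when $C \gg \log_2 n$, which is no longer a fixed threshold. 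So the family you count has density tending to zero and your liminf would be $0$, not positive. Moreover, the density condition is not even sufficient for the "middle is full" step: if $M$ consists of all even integers in the central region, every window of length $2$ meets $M$, yet in the range where only $M+M$ can contribute (between $\max L_0 + \max M$ and $\min M + n - \max R_0$) the sumset contains only even numbers, so the middle of $B+B$ has gaps.

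The paper avoids both problems by imposing \emph{no} condition on the middle and working probabilistically at the level of the sumset: by Proposition 8 of Martin--O'Bryant \cite{MO}, for a random middle the probability that $B+B$ misses some element of its central region is at most $6(2^{-|L|}+2^{-|U|})$, uniformly in the length $m$ of the ambient interval, because a central element $k$ fails to lie in $B+B$ only if, for each of the many disjoint representations $k = x+(k-x)$, not both summands lie in $B$---an event whose probability decays geometrically in the distance from $k$ to the fringes. Fullness of the middle of $B+B$ (hence of $s_1B-d_1B$) is therefore a high-probability event even though local density of $B$ itself is exponentially rare; this distinction is exactly what your argument misses. Two further points. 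First, the paper's frozen fringes are not your bare $L_0$ and $n-R_0$ but $A \cup [(s_1+d_1)n, 2(s_1+d_1)n-1]$ and its reflection: the extra full interval, placed $(s_1+d_1)n$ away from the copy of $A$, guarantees that no combination involving a middle element can land in the witness-fringe region, so the fringes of $s_iB-d_iB$ are exact copies of $s_iA-d_iA$. Without this buffer, your claim that the inequality "transfers unchanged to every admissible $B$" is unjustified, since sums involving the ends of $M$ could fill in the witness's fringe deficiencies. Second, the hypothesis $s_1\geq 2$ has nothing to do with fringe lengths: it merely excludes the case $s_1=d_1=1$ (already treated in \cite{MO}), since one may always replace $A$ by $-A$.
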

\begin{rek}  Note that the assumption $s_1\geq2$ only rules out the case $s_{1}=d_{1}=1$,
since we can always replace $A$ with $-A$ without affecting the cardinalities. This case has already been dealt with in detail in \cite{MO}.
\end{rek}

\begin{proof}
By translation, we can assume that $A\subseteq [0,n-1]$, with $0,n-1\in A$.

Choose some $m\geq4(s_{1}+d_{1})n$, and define \bea
L &\ =\ & A\cup[(s_{1}+d_{1})n,2(s_{1}+d_{1})n-1] \nonumber\\ U&=&[m-2(s_{1}+d_{1})n,m-(s_{1}+d_{1})n-1]\cup(A+(m-n)).\eea Informally, our fringes consist of a copy of $A$ at the far end,
then an interval of size $(s_{1}+d_{1})n$ which is located $(s_{1}+d_{1})n$
away from the edge.

Furthermore, define $l=u=2(s_{1}+d_{1})n$. Next, we note three things:
\begin{enumerate}
\item $[\frac{l}{2},2l-2]\subset L+L$
\item $[2m-2u,2m-\frac{u}{2}-1]\subset U+U$
\item $[m-u,m+l-2]\subset L+U$.
\end{enumerate}
Each of these claims follows from $[\frac{l}{2},l-1]\subset L$ and $[m-u,m-\frac{u}{2}-1]\subset U$,
as well as the fact that $0\in L$, and $m-1\in U$

Next, suppose that $B\subset[0,m-1]$ is a set with fringes $L,U$.
Based on Proposition 8 of \cite{MO}, the probability
that\be
[2l-1,m-u-1]\cup[m+l-1,2m-2u-1]\ \subseteq\ B+B\ee is at least \be 1-6(2^{-\left|L\right|}+2^{-\left|U\right|})\ > \ 1-6(2^{-(s_{1}+d_{1})n}+2^{-(s_{1}+d_{1})n})\ = \ 1-6\cdot2^{-(s_{1}+d_{1})n+1}\ = \ c. \ee

Therefore, if $B$ is a set as above, then with positive probability
(that is independent of $m$), \be
\left[\frac{l}{2},2m-\frac{u}{2}-1\right]\ \subset\ B+B.\ee Essentially, we have chosen the fringes of $B$ such that with a positive
probability that is independent of $m$, the entire middle (here middle
means everything besides the $(s_{1}+d_{1})n$ elements on each side)
of $B+B$ will be full.
However, this means that the entire middle
of $s_{1}B-d_{1}B$ will also be full. Therefore, it only remains
to check the fringes of $s_{1}B-d_{1}B$. Each of
these fringes is just a copy of $s_{1}A-d_{1}A$.
Therefore, $s_{1}B-d_{1}B$ consists of a copy on $s_{1}A-d_{1}A$
on each fringe, and everything in between.

To show that $\left|s_{1}B-d_{1}B\right|>\left|s_{2}B-d_{2}B\right|$,
it is sufficient to note that for the exact same reasons, the fringes
of $s_{2}B-d_{2}B$ will just be copies of $s_{2}A-d_{2}A$. Therefore,
since $s_{1}B-d_{1}B$ contains strictly more elements on the fringe,
as well as everything not on the fringe, it must have more elements
that $s_{2}B-d_{2}B$.

As for the probability, we have made $4(s_{1}+d_{1})n$ choices for
the fringes of $B$, and making sure the middle is full accounts for a factor of $c$. So the probability that $\left|s_{1}B-d_{1}B\right|>\left|s_{2}B-d_{2}B\right|$
is at least $c2^{-4(s_{1}+d_{1})n}$.
\end{proof}

%%%%%%%%%%%%%%%%%%%%%%%%%%%%%%%%%%%%%%%%%%%%%%%%%%%%%%%%%%%%%%%%%%%%%%%%%%%%%%%%%%
%%%%%%%%%%%%%%%%%%%%%%%%%%%%%%%%%%%%%%%%%%%%%%%%%%%%%%%%%%%%%%%%%%%%%%%%%%%%%%%%%%
\section{Technical Constructions}\label{sec:technicalconstructions}

\subsection{Multiple Fringes}

In order to prove Theorems \ref{thm:xjyjwjzj} and \ref{thm:simulcomparison}, we first construct a very well behaved set. Then, in Section \S\ref{sec:kgenerational} we will use the base expansion method to create a set that combines many different copies of the below set.

\begin{lem}\label{lem:superniceset}
Suppose $k\in\mathbb{N}$, and $s,d\in\mathbb{N}\cup\{0\}$ such that
$s+d=k$ and $s\geq d$. There exists a set $A\subset\mathbb{N}\cup\{0\}$
such that if $s'+d'=k$, $s'\neq s$, and $s'\geq d'$, then $\left|sA-dA\right|=\left|s'A-d'A\right|+1$.\end{lem}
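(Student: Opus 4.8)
I would construct a single set $A$ that simultaneously handles every competing decomposition $s'+d'=k$ with $s'\neq s$. The idea is to exploit exactly the fringe phenomenon already isolated in Lemma \ref{lem:xLyRproperties}: using the same building blocks $L,R$ with $\ell=2k+1$, $r=2k+2$, the fringes of $s'A-d'A$ are (up to translation and reflection) $s'L+d'R$ and $d'L+s'R$, and by that lemma each such fringe is an interval missing a controlled set of elements near its top. Because $\ell$ and $r$ differ, the arithmetic length of a fringe depends on how many copies of $R$ versus $L$ it contains, and this is the lever that lets one decomposition lose exactly one element relative to another. Concretely, I would take
\begin{equation}
A \ = \ L \cup M \cup (n-R),
\end{equation}
for a suitably long middle interval $M$ and large $n$, mirroring the constructions in the proof of Theorem \ref{thm:onegeneralizedMSTDset}.

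\textbf{Key steps.} First I would fix $M$ long enough (choosing $n$ larger than a constant multiple of $kr$, as in Theorem \ref{thm:onegeneralizedMSTDset}) so that for \emph{every} decomposition with $s'+d'=k$ the middle of $s'A-d'A$ is completely full; this is the same induction on the number of copies of $M$ used in the earlier proof, and it reduces the whole problem to counting missing elements in the two fringes. Second, for each pair $(s',d')$ with $s'\geq d'$ I would compute, via Lemma \ref{lem:xLyRproperties}, exactly how many elements the left fringe $s'L+d'R$ and the right fringe $d'L+s'R$ contribute, paying attention to whether the lone gap at $2k-1$ below the top survives the intersection with the middle. The decisive point, just as in Case 1 and Case 2 above, is that this survival is governed by an inequality comparing $s'$ (or $d'$) against the offset built into $\min(M)$; by choosing that offset to be $kr-2k+1-d$ (the value tuned to the distinguished pair $(s,d)$), the gap survives for the pair $(s,d)$ but gets filled for every other pair, producing the desired deficit of exactly one. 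Finally I would assemble these fringe counts to conclude $|sA-dA| = |s'A-d'A|+1$ for all admissible $s'\neq s$.

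\textbf{The main obstacle.} The real work is not any single fringe computation but verifying that \emph{one} choice of the middle offset simultaneously yields the same count $|s'A-d'A|$ across all the competing pairs $s'\neq s$, while leaving $|sA-dA|$ one larger. In Theorem \ref{thm:onegeneralizedMSTDset} only two decompositions were compared, so the offset could be tailored to that single comparison; here I must check that the boundary inequalities (of the form $s'>d$, $d'>d$, etc.) fall the same way for \emph{every} $s'>d$ with $s'+d'=k$, so that the number of missing fringe elements is constant over all such pairs and strictly smaller for the distinguished pair. I expect this to follow from a uniform case analysis on whether $s'>d$ and whether $d'>d$, exactly as in the two branches of the earlier proof, together with Lemma \ref{lem:balancedbeforek} to handle the degenerate possibility $s'=d'$; but confirming that no pair slips into a different regime is the step requiring care, and it is where the precise value $kr-2k+1-d$ of the offset is forced.
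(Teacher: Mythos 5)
There is a genuine gap here: the single-fringe set $A = L \cup M \cup (n-R)$ with the middle starting at the one offset $kr-2k+1-d$ cannot prove this lemma, because the distinguished pair $(s,d)$ must beat competitors on \emph{both} sides ($s'>s$ and $s'<s$), and a single offset only penalizes one side. To see this concretely, use Lemma \ref{lem:xLyRproperties} together with the identities $kr = s'\ell+d'r+s' = d'\ell+s'r+d'$ (valid whenever $s'+d'=k$). The left fringe of $s'A-d'A$ is, after translation, $(s'L+d'R)\cap[0,\,s'\ell+d'r-2k+(s'-d)]$, so its lone gap at $s'\ell+d'r-2k+1$ survives iff $s'>d$, which holds for every admissible pair, including $(s,d)$ itself in the main case $s>d$. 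The right fringe is $(d'L+s'R)\cap[0,\,d'\ell+s'r-2k+(d'-d)]$, so its gap survives iff $d'>d$, i.e., iff $s'<s$. Hence with your set: the pair $(s,d)$ misses exactly one element; a competitor with $s'<s$ misses two, as desired; but a competitor with $s'>s$ (equivalently $d'<d$) also misses exactly one, giving the tie $|sA-dA|=|s'A-d'A|$ rather than the required deficit. Such a competitor exists whenever $d\geq 1$: take $(s',d')=(s+1,d-1)$. So your construction succeeds only in the two exceptional cases the paper dispatches separately ($d=0$, and $s=d$), and fails in the generic case $0<d<s$; moreover, retuning the single offset (say to $kr-2k+1-s$) merely trades one failure for the other, since then competitors with $s'<s$ tie instead. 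Once competitors exist on both sides of $s$ (i.e., $d\geq1$ and $s\geq d+2$), no single-fringe offset works.

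This is precisely why the paper's proof does not use the set from Theorem \ref{thm:onegeneralizedMSTDset}: it uses a structurally different \emph{two-layer} construction $A = L_1\cup L_2\cup M\cup(n-R_2)\cup(n-R_1)$, with outer fringe $L_1=L$, inner fringe $L_2 = L+(kr-2k+1-d)$, and the middle starting at $2kr-4k+2-d-s$. The outer fringes are tuned to $d$ and penalize, via the outer right fringe, exactly the competitors with $d'>d$; the inner fringes occupy an interval whose length is tuned to $s$, and the inner left fringe $L_2+(s'-1)L_1+d'R_1$ becomes, after translation, $(s'L+d'R)\cap[0,kr-2k-s]$, whose gap survives iff $s'>s$ --- penalizing exactly the competitors on the other side. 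The distinguished pair escapes both penalties (missing one element, from its outer left fringe only) while every competitor incurs exactly one penalty (missing two), which is what yields $|sA-dA|=|s'A-d'A|+1$ uniformly. Your ``main obstacle'' paragraph correctly senses that the boundary inequalities cannot all fall the same way, but the resolution is not a finer case analysis of the single-fringe set (and Lemma \ref{lem:balancedbeforek} is of no help for the pair $s'=d'=k/2$, since it concerns totals $s'+d'\neq k$); it is the nested double-fringe construction, which your proposal does not contain.
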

\begin{proof}
Have $L,R$ as in \eqref{eq:LReq}:
\bea
L&\ =\ & \{0,1,3,4,\ldots,k-1,k,k+1,2k+1\}\nonumber\\
&=& \{2k+1\} \cup [0,k+1] \setminus \{2\} \nonumber\\
R&=&\{0,1,2,4,5,\ldots,k,k+1,k+2,2k+2\} \nonumber\\
&=& \{2k+2\} \cup [0,k+2] \setminus \{3\}.\eea Set $\ell=2k+1$ and $r=2k+2$ (just as before).

Before we give the main proof, there are two exceptional cases to
consider. We have already proved the case where $d=0$. If $s=d$,
then choose $n>2\left(kr-2k+1-d\right)$ and take \be A\ = \ L\cup[kr-2k+1-d,n-(kr-2k+1-d)]\cup(n-R).\ee
For this set, $sA-dA$ misses no elements, and $s'A-d'A$
misses one element for any choice of $s',d'$ that satisfies
the above. The proof of this statement is essentially the same as
found in the above proofs.

Now we assume that $s,d\geq1$ and that $s>d$. Set
\bea
A& \ =  \  & L \cup (L+kr-2k+1-d) \cup [2kr-4k+2-d-s,n-(2kr-4k+2-d-s)] \nonumber\\ & & \ \ \ \cup (n-(2kr-2k+1-d)-R) \cup (n-R).\eea
Essentially, $A$ consists of an outer fringe, and inner fringe, and
a full middle. Both the outer fringe and the inner fringe have the
same structure (they are both made up of $L$ and $R$). For simplicity,
we write this as \be
A\ = \ L_{1}\cup L_{2}\cup M\cup(n-R_{2})\cup(n-R_{1}),\ee
where $L_{1}=L$, $L_{2}=L+kr-2k+1-d$, $R_{1}=R$, $R_{2}=R+kr-2k+1-d$,
and $M=[2kr-4k+2-d-s,n-(2kr+2-d-s)]$.

Note first that because $n$ is sufficiently large, $sA-dA$ and $s'A-d'A$
will contian the entire middle (the logic for this is the same as
above). Further note that the fringes of $sA-dA$ are \be (sL_{1}-d(n-R_{1}))\cup(L_{2}+(s-1)L_{1}-d(n-R_{1}))\ee
and \be (s(n-R_{1})-dL_{1})\cup((n-R_{2})+(s-1)(n-R_{1})-dL_{1}.\ee This
is because all other sums/differences fall in the large and full middle.
As usual, we will translate these sets (and possibly multiply by $-1$),
and look at \be (sL_{1}+dR_{1})\cup(L_{2}+(s-1)L_{1}+dR_{1}) \ {\rm and}\ (sR_{1}+dL_{1})\cup(R_{2}+(s-1)R_{1}+dL_{1}).\ee
We analyze each of these four fringes one at a time.
\begin{enumerate}
\item $sL_{1}+dR_{1}$

First note that $L_{2}+(s-1)L_{1}+dR_{1}$ contains the interval $[kr-k+1-s,kr-s]=[s\ell+dr-k+1,s\ell+dr-1]$.
This means that of the potential missing elements in $sL_{1}+dR_{1}$,
all except for $kr-2k+1$ can be found in $L_{2}+(s-1)L_{1}+dR_{1}$.
Essentially, we are interested in $sL_{1}+dR_{1}\cap[0,kr-2k-d]$.
This is $sL_{1}+dR_{1}\cap[0,s\ell+dr-2k-d+s]$, which is just $[0,kr-2k-d]\backslash\{s\ell+dr-2k+1\}$
(because $s>d$). Therefore the outer left fringe is missing one element.

\item $L_{2}+(s-1)L_{1}+dR_{1}$

Part of this set will intersect with the full middle, so we are really
only interested in $L_{2}+(s-1)L_{1}+dR_{1}\cap[kr-2k+1-d,2kr-4k+1-d-s]$.
After translation, this is the same as $sL_{1}+dR_{1}\cap[0,kr-2k-s]$.
This is $sL_{1}+dR_{1}\cap[0,s\ell+dr-2k]$, which is just $[0,s\ell+dr-2k]$.
Therefore the inner left fringe is missing no elements.

\item $sR_{1}+dL_{1}$

Similar to the above case, this fringe intersects with $R_{2}+(s-1)R_{1}+dL_{1}$.
Therefore, we are only interested in $sR_{1}+dL_{1}\cap[0,kr-2k-d]$.
This is the same as $sR_{1}+dL_{1}\cap[0,sr+d\ell-2k]$, which is
just $[0,sr+d\ell-2k${]}. Therefore the outer right fringe is missing
no elements.

\item $R_{2}+(s-1)R_{1}+dL_{1}$

Because of the intersection with the middle, we are only interested
in $R_{2}+(s-1)R_{1}+dL_{1}\cap[kr-2k+1-d,2kr-4k+1-d-s]$. After translation,
this is just $sR_{1}+dL_{1}\cap[0,kr-2k-s]=sR_{1}+dL_{1}\cap[0,sr+d\ell-2k-s+d]$.
Since $s>d$, this is just the entire interval $[0,kr-2k-s]$. Therefore
the inner right fringe is missing no elements.

\end{enumerate}
Next, we run through the same analysis with $s'A-d'A$. We split this
into two cases. First, if $s'>s$, then:
\begin{enumerate}
\item $s'L_{1}+d'R_{1}$

Just as above, we are interested in $s'L_{1}+d'R_{1}\cap[0,kr-2k-d]$.
This is $s'L_{1}+d'R_{1}\cap[0,s'\ell+d'r-2k-d+s']=[0,kr-2k-d]\backslash\{s'\ell+dr-2k+1\}$.
Therefore the outer left fringe is missing one element.

\item $L_{2}+(s'-1)L_{1}+d'R_{1}$

Similar to above, this is the same thing as $s'L_{1}+d'R_{1}\cap[0,kr-2k-s]$.
This is just $s'L_{1}+d'R_{1}\cap[0,s'\ell+d'r-2k-s+s']$. Since $s'>s$,
this is $[0,s'\ell+d'r-2k-s+s']\backslash\{s'\ell+d'r-2k+1\}$.

\end{enumerate}
Therefore $s'A-d'A$ is missing at least two elements. Only slightly
more work shows that the set is missing exactly two elements, which
means that $\left|sA-dA\right|=\left|s'A-d'A\right|+1$.

Next, we assume that $s>s'$. In this case we have $s>s'\geq d'>d$.
If we perform the same analysis as above, we will find that $sL_{1}+dR_{1}$
and $sR_{1}+dL_{1}$ are both missing one element. Therefore we get
that $\left|sA-dA\right|=\left|s'A-d'A\right|+1$.\end{proof}
\begin{cor}
Suppose that $k\in\mathbb{N}$, and $s,d\in\mathbb{N}\cup\{0\}$ such
that $s+d=k$. Then there exists a set $A\subseteq\mathbb{N}\cup\{0\}$
such that if $s'+d'=k$, $\{s,d\}\neq\{s',d'\}$, then $\left|sA-dA\right|=\left|s'A-d'A\right|+1$.\end{cor}
\begin{proof}
This follows from the above if we just note that $\left|sA-dA\right|=\left|-(sA-dA)\right|$.
\end{proof}

%%%%%%%%%%%%%%%%%%%%%%%%%%%%%%%%%%%%%%%%%%%%%%%

\subsection{Base expansion}

We end this section with a quick proof of the base expansion method for creating new sets. Base expansion allows us to use multiple copies of the well-behaved sets constructed in Lemma \ref{lem:superniceset} to create the sets in Theorems \ref{thm:xjyjwjzj} and \ref{thm:simulcomparison}.

\begin{lem}\label{lem:basexpansionlem} Fix a positive integer $k$. Let $A,B\subset\mathbb{N}\cup\{0\}$ and choose $m>k\cdot \max(A)$.
Let $C=A+m\cdot B$ (where $m\cdot B$ is the usual scalar multiplication). Then $\left|sC-dC\right|=\left|sA-dA\right|\cdot\left|sB-dB\right|$
whenever $s+d\leq k$.\end{lem}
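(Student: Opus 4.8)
The plan is to show that the map $(u,v)\mapsto u+m\cdot v$ gives a bijection between $(sA-dA)\times(sB-dB)$ and $sC-dC$, from which the product formula for cardinalities follows immediately. First I would write a generic element of $sC-dC$ as a signed combination of $s+d$ elements of $C=A+m\cdot B$. Since each element of $C$ has the form $a_i+m b_i$ with $a_i\in A$ and $b_i\in B$, collecting the $A$-parts and the $B$-parts separately shows that every element of $sC-dC$ has the form $\alpha+m\beta$ with $\alpha\in sA-dA$ and $\beta\in sB-dB$. This proves the easy inclusion and surjectivity of the map onto its image; conversely, any such $\alpha+m\beta$ clearly arises from a suitable choice of the $a_i,b_i$, so the map is onto $sC-dC$.

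The crux is injectivity, and this is where the hypothesis $m>k\cdot\max(A)$ is used. I would argue that the decomposition $w=\alpha+m\beta$ with $\alpha\in sA-dA$ is \emph{unique}, i.e. that $\beta$ is determined by $w$. The point is that $\alpha$ ranges over $sA-dA$, whose elements are bounded in absolute value by $s\cdot\max(A)\le k\cdot\max(A)<m$ (the negative part $-dA$ only makes $\alpha$ larger in magnitude downward, but still $|\alpha|\le(s+d)\max(A)=k\max(A)<m$ since $\min(A)\ge 0$). Thus $\alpha$ lies in a window of width strictly less than $m$ around a controlled location, so two representations $\alpha+m\beta=\alpha'+m\beta'$ force $m\mid(\alpha-\alpha')$ with $|\alpha-\alpha'|<m$ (after checking the bound carefully, allowing for the full range of $sA-dA$), hence $\alpha=\alpha'$ and $\beta=\beta'$. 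I would state the bound on the diameter of $sA-dA$ precisely: every element lies in $[-d\max(A),\,s\max(A)]$, an interval of length $(s+d)\max(A)=k\max(A)$, which is $<m$.

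The main obstacle is getting the inequality bookkeeping exactly right so that distinct pairs $(\alpha,\beta)$ really do give distinct values $\alpha+m\beta$. Concretely, I need that if $\beta\ne\beta'$ then $\alpha+m\beta\ne\alpha'+m\beta'$; this reduces to $|\alpha-\alpha'|<m$ for all $\alpha,\alpha'\in sA-dA$, which is exactly the diameter bound above since $m>k\max(A)\ge\operatorname{diam}(sA-dA)$. Once injectivity is established, the counting identity $|sC-dC|=|sA-dA|\cdot|sB-dB|$ is immediate because the map is a bijection between a product set and $sC-dC$. I would close by noting that the condition $s+d\le k$ is what guarantees $s+d$ (hence the number of summands, and the relevant bound $k\max(A)$) stays within the range for which $m$ was chosen large enough.
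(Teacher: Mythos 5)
Your proof is correct and takes essentially the same approach as the paper's: both rest on the unique decomposition of each element of $sC-dC$ as $\alpha+m\beta$ with $\alpha\in sA-dA$, $\beta\in sB-dB$, where uniqueness follows from $m\mid(\alpha-\alpha')$ together with the diameter bound $\operatorname{diam}(sA-dA)\le(s+d)\max(A)\le k\max(A)<m$. If anything, your version is more complete, since the paper only writes out the cases $C$ and $C\pm C$ explicitly (leaving the size bound implicit in ``$m$ sufficiently large'') and handles general $s+d\le k$ with the remark that ``a similar proof shows this fact.''
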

\begin{proof}
Note that each element of $A+mB$ can be written uniquely as $a+mb$
for some $a\in A$, $b\in B$. This is true because if $a_{1}+mb_{1}=a_{2}+mb_{2}$,
then $a_{1}-a_{2}=m(b_{2}-b_{1})$. Because we chose $m$ sufficiently
large, this is only possible when $b_{1}=b_{2}$, in which case $a_{1}-a_{2}=0$.
Therefore $\left|C\right|=\left|A\right|\left|B\right|$.

Furthermore, each element of $C\pm C$ can be written uniquely as
$a'\pm mb'$, where $a'\in A\pm A$ and $b'\in B\pm B$. As proof,
assume $a_{1}\pm mb_{1}=a_{2}\pm mb_{2}$ for some $a_{1,}a_{2}\in A\pm A$,
and $b_{1},b_{2}\in B\pm B$. This means $a_{1}-a_{2}=\mp m(b_{2}-b_{1})$,
and this is only possible when $ $$a_{1}=a_{2}$, and $b_{1}=b_{2}$.
Therefore, $\left|C\pm C\right|=\left|A\pm A\right|\left|B\pm B\right|$.
A similar proof shows this fact for any $s+d\leq k$.
\end{proof}
In fact, base expansion works in more generality:

\begin{lem} Fix a positive integer $k$. Say that $A_{1},\ldots,A_{k}\subset\mathbb{N}\cup\{0\}$. Choose some
$m>k\cdot \max(\{a;a\in A_{k}\text{ for some }k\})$. Let $C=A_{1}+m\cdot A_{2}+\cdots+m^{k-1}\cdot A_{k}$ (where $m\cdot A_j$ is the usual scalar multiplication).
Then $\left|sC-dC\right|=\prod_{j=1}^{k}\left|sA_{j}-dA_{j}\right|$
whenever $s+d\leq k$.\end{lem}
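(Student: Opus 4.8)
The plan is to generalize the two-block base expansion result (Lemma \ref{lem:basexpansionlem}) to the $k$-block decomposition by induction on $k$, leveraging the uniqueness-of-base-expansion idea that drove the two-block case. The engine of the original proof is that every element of $C\pm C$ has a \emph{unique} representation $a'\pm mb'$ with $a'\in A\pm A$ and $b'\in B\pm B$, which follows from the spacing hypothesis $m > k\cdot\max(A)$. The heart of the matter is to prove the analogous unique-representation statement for $sC-dC$ when $C=A_1+mA_2+\cdots+m^{k-1}A_k$, namely that every element decomposes uniquely as $\sum_{j=1}^k m^{j-1}c_j$ with each $c_j\in sA_j-dA_j$.

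First I would set up the induction. The base case $k=1$ is trivial (and $k=2$ is exactly Lemma \ref{lem:basexpansionlem}). For the inductive step I would write $C=A_1+m\cdot C'$, where $C'=A_2+mA_3+\cdots+m^{k-2}A_k$ is the base-expansion of the remaining $k-1$ sets with respect to the \emph{same} modulus $m$. The key observation is that every element of $sC-dC$ can be written as $\alpha+m\beta$ with $\alpha\in sA_1-dA_1$ and $\beta\in sC'-dC'$; applying Lemma \ref{lem:basexpansionlem} to the pair $(A_1,C')$ would then give $|sC-dC|=|sA_1-dA_1|\cdot|sC'-dC'|$, and the inductive hypothesis would finish via $|sC'-dC'|=\prod_{j=2}^k|sA_j-dA_j|$.

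The step I expect to be the main obstacle is verifying that Lemma \ref{lem:basexpansionlem} actually applies to the pair $(A_1,C')$: that lemma requires the separation hypothesis $m>k\cdot\max(A_1)$, which is guaranteed directly by our choice of $m$, but one must also confirm that the uniqueness argument survives when the ``digit'' sets $sA_1-dA_1$ range over possibly negative integers (since $d\ge 1$ is allowed) and when $C'$ itself is a large composite set. The careful point is bounding the carries: an element of $s(A_1)-d(A_1)$ lies in $[-d\max(A_1),\,s\max(A_1)]$, an interval of length at most $(s+d)\max(A_1)\le k\max(A_1)<m$, so no two distinct values of the first ``digit'' $\alpha$ can be absorbed into a shift of the $m\beta$ term. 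I would make this collision bound explicit: if $\alpha_1+m\beta_1=\alpha_2+m\beta_2$ then $\alpha_1-\alpha_2=m(\beta_2-\beta_1)$, and since $|\alpha_1-\alpha_2|\le k\max(A_1)<m$ the right side forces $\beta_1=\beta_2$ and hence $\alpha_1=\alpha_2$. Once this separation is in hand for the pair $(A_1,C')$, the product formula follows as in the two-block case, and the induction closes cleanly; the only genuine bookkeeping is checking that the constant $m$ chosen to dominate $k\cdot\max(\bigcup_j A_j)$ simultaneously serves every layer of the induction, which it does because the maximum of $A_1$ is dominated by the maximum over all the $A_j$.
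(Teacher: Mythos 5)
Your proposal is correct and is essentially the paper's own proof: the paper disposes of this lemma in one line (``This can be proved using induction and the previous lemma''), i.e., induction on the number of sets combined with Lemma \ref{lem:basexpansionlem} applied to the pair $(A_1, C')$ with $C' = A_2 + mA_3 + \cdots + m^{k-2}A_k$, exactly as you describe. Your explicit collision bound $|\alpha_1 - \alpha_2| \le (s+d)\max(A_1) \le k\max(A_1) < m$ and your observation that $k$ should be kept fixed as the bound on $s+d$ (decoupled from the number of sets) while the induction runs are precisely the bookkeeping the paper leaves to the reader.
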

\begin{proof}
This can be proved using induction and the previous lemma.
\end{proof}

%%%%%%%%%%%%%%%%%%%%%%%%%%%%%%%%%%%%%%%%%%%%%%%%%%%%%%%%%%%%%%%%%%%%%%%%%%%%%%%%%%
%%%%%%%%%%%%%%%%%%%%%%%%%%%%%%%%%%%%%%%%%%%%%%%%%%%%%%%%%%%%%%%%%%%%%%%%%%%%%%%%%%
\section{$k$-Generational Sets}\label{sec:kgenerational}

\subsection{Proof of Theorem \ref{thm:xjyjwjzj}}

We now have the tools required to prove our results about chains.

\begin{proof}[Theorem \ref{thm:xjyjwjzj}]
For each $j$, choose a set $A_{j}$ such that $\left|x_{j}A_{j}-y_{j}A_{j}\right|$ $>$ $\left|w_{j}A_{j}-z_{j}A_{j}\right|$,
and $|s_{1}A_{j}$ $-$ $d_{1}A_{j}|$ $=$ $|s_{2}A_{j}$ $-$ $d_{2}A_{j}|$
whenever $s_{1}+d_{1}=s_{2}+d_{2}\neq j$. We know such a set exists,
because of Theorem \ref{thm:generalizedMSTD} and Lemma \ref{lem:balancedbeforek}. Next, choose some $m>k\cdot\max(\{a\in A_{j};\,1\leq j\leq k\})$.
Define $A=A_{1}+mA_{2}+m^{2}A_{3}+\cdots+m^{k-1}A_{k}$. We have that
for each $2\leq j\leq k$ \begin{align}
\left|x_{j}A-y_{j}A\right| &\ =\ \prod_{i=1}^{k}\left|x_{j}A_{i}-y_{j}A_{i}\right|\nn
 &\ =\ \left|x_{j}A_{j}-y_{j}A_{j}\right|\cdot\prod_{i\neq j}\left|x_{j}A_{i}-y_{j}A_{i}\right|\nn
 &\ =\ \left|x_{j}A_{j}-y_{j}A_{j}\right|\cdot\prod_{i\neq j}\left|w_{j}A_{i}-z_{j}A_{i}\right|\nn
 &\ >\ \left|w_{j}A_{j}-z_{j}A_{j}\right|\cdot\prod_{i\neq j}\left|w_{j}A_{i}-z_{j}A_{i}\right|\nn
 &\ =\ \left|w_{j}A-z_{j}A\right|.\end{align}
\end{proof}

Most of Corollary \ref{cor:kgenerationalexists} now follows automatically. The existence of a $k$-generational set is proven by the above theorem, and proving that a positive percentage of sets have this property only requires a slight modification of the work done in \S\ref{sec:pospercent}. It only remains to that no set can be $k$-generational for all $k$ by analyzing the limiting behavior of $|kA|$ and $|kA-kA|$.

\subsection{Limiting behavior of $|kA|$ and $|kA-kA|$}

Before proving Corollary \ref{cor:kgenerationalexists}(3), we first prove two useful lemmas.

\begin{lem}\label{lem:behaviorkA} Let $A=\{a_1,a_2,\ldots,a_m\}\subset[0,n-1]$ be a set of integers where $a_1<a_2<\cdots<a_m$ and let $s=\gcd(a_1,a_2,\ldots,a_m)$. Then there exists an integer $N$ such that for $k\ge N$ we have $|kA|=\frac{k(a_m-a_1)}{s}-C$ where $C$ is a constant and $N$ is bounded above by $\frac{a_m-a_1}{s}$.
\end{lem}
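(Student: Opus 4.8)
The key reduction is to normalize the set. First I would observe that since $|kA|$ is invariant under translation and under dividing all elements by a common factor, I may assume without loss of generality that $a_1 = 0$ and $\gcd(a_1, \dots, a_m) = \gcd(a_2, \dots, a_m) = 1$. After this reduction the claim becomes $|kA| = k a_m - C$ for all $k \geq N$, with $N \leq a_m$. The set $kA$ lives in the interval $[0, k a_m]$, which contains $k a_m + 1$ integers, so proving the formula is equivalent to showing that the number of integers in $[0, k a_m]$ \emph{missing} from $kA$ stabilizes to a constant $C+1$ once $k \geq N$.

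\textbf{The structure of $kA$ in the interior.} The plan is to split $[0, k a_m]$ into three pieces: a left fringe near $0$, a right fringe near $k a_m$, and a large middle that is completely filled. The central tool is the classical fact from the numerical-semigroup / Frobenius setting: since $\gcd(a_2 - a_1, \dots, a_m - a_1) = 1$, every sufficiently large integer is a nonnegative integer combination $\sum c_i a_i$; let $F$ be the Frobenius number, so every integer $> F$ is representable. I would argue that for $k$ large, every integer $t$ with $F < t < k a_m - F$ lies in $kA$: such a $t$ is representable as a combination using some total number of summands $\sum c_i \leq k$, and if $\sum c_i < k$ one pads with copies of $a_1 = 0$ to reach exactly $k$ summands, so $t \in kA$. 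This fills the entire middle. Consequently the only integers that can be missing from $kA$ lie in the two fringes $[0, F]$ and $[k a_m - F, k a_m]$.

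\textbf{Stabilization of the fringes.} The left fringe $kA \cap [0,F]$ consists exactly of the representable integers in $[0,F]$ that can be written with at most $k$ summands from $A$; once $k \geq F+1$ (indeed once $k$ exceeds the number of summands needed for any element of $[0,F]$), this set no longer depends on $k$, since no integer $\leq F$ needs more than $F$ summands of size $\geq 1$. By the symmetry $t \mapsto k a_m - t$, which sends $kA$ to itself (replace each summand $a_i$ by $a_m - a_i$), the right fringe is a reflected copy of the left fringe and likewise stabilizes. Thus for $k$ large the total count of missing elements is a constant, giving $|kA| = k a_m - C$.

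\textbf{The bound $N \leq a_m$ and the main obstacle.} The genuinely delicate part is the explicit bound $N \leq \frac{a_m - a_1}{s}$ (i.e.\ $N \leq a_m$ after normalization), which is sharper than the crude $N \approx F$ one gets from the Frobenius number directly. The difficulty is that $F$ can be as large as roughly $(a_m-1)(a_2-1)$, far exceeding $a_m$, so I cannot simply take $N = F+1$. To get the tight bound I expect I must track more carefully how the fringe patterns stabilize: rather than waiting for full representability, I would show that the set of \emph{newly added} elements when passing from $(k-1)A$ to $kA$, measured relative to the growing window, stops changing once $k \geq a_m$, by a direct monotonicity argument on which residues and offsets become reachable as one more copy of $A$ is adjoined. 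This explicit-bound step is where the real work lies; the qualitative eventual-linearity is comparatively routine once the middle-filling argument is in place.
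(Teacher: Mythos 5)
Your skeleton (translate and rescale so the least element is $0$ and the gcd is $1$; show the two fringes stabilize; show the middle fills in) is the same as the paper's, but two of your steps are false as stated, and the quantitative half of the lemma is conceded rather than proved. The symmetry claim is wrong: the reflection $t \mapsto k a_m - t$ carries $kA$ onto $k(a_m - A)$, not onto $kA$; the set $kA$ is not symmetric unless $A$ is. So the right fringe of $kA$ is a reflected copy of the left fringe of $k(a_m-A)$, and it is governed by the Frobenius number $F'$ of the \emph{reflected} set, which can be of order $a_m^2$ even when $F$ is tiny. This is not cosmetic, because it falsifies your middle-filling claim. Take $A=\{0,1,N\}$ with $N\geq 3$: every nonnegative integer is representable, so $F\leq 0$, yet $t = kN-1$ lies in your window $(F, ka_m - F)$ and $t \notin kA$, since any representation $c_1\cdot 1 + c_2\cdot N = kN-1$ forces $c_1 \equiv -1 \pmod N$, hence $c_1 \geq N-1$ and $c_1+c_2 \geq k+N-2 > k$. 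Your sentence ``such a $t$ is representable as a combination using some total number of summands $\sum c_i \leq k$'' is precisely the assertion that needs proof; representability alone ($t>F$) gives no control on the number of summands, and near the top of the interval that control genuinely fails.

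Separately, the bound $N \leq (a_m-a_1)/s$ is part of the statement and is the real content of the lemma (the paper uses it to improve Nathanson's stabilization bound and, via the companion Lemma on $|kA-kA|$ versus $|kA+kA|$, to rule out sets that are $k$-generational for all $k$); you explicitly defer it to an unspecified ``monotonicity argument.'' The paper obtains it from an elementary counting observation that your Frobenius framing obscures. After normalization write $A=\{0,a_1,\ldots,a_m\}$ with $a_1$ the smallest nonzero element and $a_m$ the largest. If $t \in kA$ and $t < a_1 a_m$, then $t$ is a sum of $j$ nonzero elements each of size at least $a_1$, so $j a_1 \leq t < a_1 a_m$ gives $j < a_m$; padding with zeros shows $kA \cap [0, a_1 a_m) = a_m A \cap [0, a_1 a_m)$ for \emph{every} $k \geq a_m$. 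Thus the left fringe stabilizes already at $k=a_m$, with no reference to $F$ at all; the right fringe stabilizes for the same reason applied to $a_m - A$ (whose nonzero elements also have gcd $1$); and the middle fills by induction on $k$, the base case using that $a_1 A$ meets every residue class modulo $a_1$ and the inductive step using that adding one more copy of $a_m$ extends the filled interval by $a_m$. Your argument could be repaired along these lines, but as written it both asserts a false middle-filling statement and omits the bound on $N$ that the lemma claims.
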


\begin{proof} It suffices to show that a set of the form $\{0, a_1,\ldots,a_m\}$ with $\gcd(a_1,\ldots,a_m)=1$ has the claimed properties (because of translating and rescaling).

Let $A=\{0, a_1,\ldots,a_m\}$, with $\gcd(a_1,\ldots,a_m)=1$. We first show that in $a_1A$ (which is the sum of $a_1$ copies of $A$) there are elements of each congruence class of $a_1$. Consider the set $B=\{0,a'_2,\ldots,a'_m\}$ where $a'_i=a_i \bmod a_1$. Clearly we also have $\gcd(a'_2,\ldots,a'_m)=1$. Thus they generate the entire set $[0,a_1]$. It is clear that the largest number of times required to add $B$ to itself is $a_1$ since the greatest order of any element in the set is $a_1$. This proves the claim.

Now consider $a_mA$, in particular we consider the set $L=a_mA\cap[0,a_1a_m]$. We show that $kA\cap[0,a_1a_m]=L$ for $k\ge a_m$. This is because $a_1$ is the smallest element in the set $A$, so elements that are less than $a_1a_m$ can be written as $\sum_{i=1}^m\epsilon_ia_i$ where $\sum_i^m\epsilon_i\le a_m$. We call $L$ the stabilized left fringe of $A$.

We can apply the same idea to the set $a_m-A$ and show that the right fringe $R=kA\cap[(k-1)a_m,ka_m]$ is also stabilized (meaning that $ka_m-R$ stays the same for all $k\ge a_m$). Now we just need to show that for $k\ge a_m$ we have $kA\backslash(L\cup R)$ is completely filled. This can be shown by induction. With all the congruence classes of $a_1$, by brute force we can show that the middle part of $a_mA$ is completely filled. This serves as the base case of the induction. If $kA\backslash(L\cup R)$ is completely filled then $kA$ contains the interval $[a_1a_m,(k-a_m+a_{m-1})a_m]$. If we add $a_m$ to this interval we will get the interval $[(k-a_m+a_{m-1})a_m,(k+1-a_m+a_{m-1})a_m]$. So in $(k+1)A$, we will have a completely filled middle $[a_1a_m,(k+1-a_m+a_{m-1})a_m]$. This completes the proof.
\end{proof}

\begin{lem}\label{cor:|kA-kA|beats|kA+kA|}
Let $A=\{a_1,a_2,\ldots,a_m\}\subset[0,n-1]$ be a set of integers where $a_1<a_2<\cdots<a_m$ and let $s=\gcd(a_1,a_2,\ldots,a_m)$. Then there exists an integer $N$ such that for $k\ge N$ we have $|kA-kA|\ge|kA+kA|$ and $N$ is bounded above by $\frac{2(a_m-a_1)}{s}$.
\end{lem}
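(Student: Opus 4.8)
The plan is to reduce the statement to Lemma \ref{lem:behaviorkA} by using the two identities $kA+kA = 2kA$ and $kA - kA = k(A-A)$, and then to compare the stabilized fringes of the two resulting sets. The identity $kA-kA = k(A-A)$ holds because any element $\sum_{p=1}^k a_{i_p} - \sum_{q=1}^k a_{j_q}$ can be regrouped as $\sum_{t=1}^k (a_{i_t}-a_{j_t})$, i.e.\ as a sum of $k$ elements of $D := A-A$, and conversely. Since both $|kA-kA|$ and $|kA+kA|=|2kA|$ are unchanged under translating $A$ (and under rescaling $A$ by a bijection $x\mapsto x/s$), I would first normalize so that $a_1=0$ and $\gcd(a_2,\dots,a_m)=1$; after this $s=\gcd(D)=1$, the span of $D$ is $\max(D)-\min(D)=2a_m=2(a_m-a_1)$, and $D=-D$ is symmetric about $0$.

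Next I would apply Lemma \ref{lem:behaviorkA} twice: to $A$, governing $|2kA|$, and to $D$, governing $|kD|$. The lemma applied to $A$ is valid once the index $2k$ exceeds a threshold at most $(a_m-a_1)/s$, i.e.\ for $k$ past roughly $(a_m-a_1)/(2s)$; applied to $D$ it is valid once $k$ exceeds a threshold at most $\frac{\max(D)-\min(D)}{\gcd(D)}=\frac{2(a_m-a_1)}{s}$. Taking $N=\frac{2(a_m-a_1)}{s}$ dominates both thresholds, so for $k\ge N$ both sets fill an arithmetic progression of the common length $\frac{2(a_m-a_1)}{s}$ up to a constant deficit:
\[
|kA+kA| \ = \ \frac{2k(a_m-a_1)}{s}-C_A, \qquad |kA-kA| \ = \ \frac{2k(a_m-a_1)}{s}-C_D .
\]
The desired inequality $|kA-kA|\ge|kA+kA|$ is therefore equivalent to $C_D\le C_A$, where $C_A,C_D$ count the missing (``gap'') elements.

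The heart of the argument is a fringe comparison. For large $k$ all gaps of $2kA$ lie in its two stabilized fringes: the left fringe is controlled by the numerical semigroup generated by (the nonzero elements of) $A$ and the right fringe by the semigroup generated by $A':=a_m-A$, so $C_A=g(S_A)+g(S_{A'})$, where $g(\cdot)$ denotes the number of gaps. For $kD$, symmetry $D=-D$ forces its two fringes to be mirror images contributing equally, and each is controlled by the semigroup $S$ generated by $D+a_m$, so $C_D=2\,g(S)$. The key inclusion is $D+a_m\supseteq A\cup(a_m-A)$: indeed $a_i=(a_i-a_m)+a_m\in D+a_m$ and $a_m-a_i=(a_1-a_i)+a_m\in D+a_m$. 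Hence $S\supseteq S_A$ and $S\supseteq S_{A'}$, and since a larger numerical semigroup has no more gaps, $g(S)\le\min\bigl(g(S_A),g(S_{A'})\bigr)$. Therefore $C_D=2\,g(S)\le 2\min\bigl(g(S_A),g(S_{A'})\bigr)\le g(S_A)+g(S_{A'})=C_A$, giving $|kA-kA|\ge|kA+kA|$ for all $k\ge N$ with $N\le\frac{2(a_m-a_1)}{s}$.

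I expect the main obstacle to be the fringe bookkeeping in the previous paragraph: making precise that for large $k$ the number of missing elements in each stabilized fringe is exactly the genus of the corresponding semigroup (which requires that the fringe window reaches beyond the Frobenius number so every gap is realized, and that the left and right fringes do not overlap) and that these are governed by $S_A$, $S_{A'}$, and $S$ as claimed. All of this is implicit in the proof of Lemma \ref{lem:behaviorkA}, but it must be extracted carefully; once the semigroup inclusion $D+a_m\supseteq A\cup(a_m-A)$ is in hand, the monotonicity of the gap count finishes the proof immediately.
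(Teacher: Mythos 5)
Your proof is correct, but it takes a genuinely different route from the paper's, which is only three lines long: the paper observes that (up to translation) $kA\subset kA-kA$, so once $cA+cA=2cA$ has stable fringes and a full middle (Lemma \ref{lem:behaviorkA}), the set $kA-kA$ for $k\geq N=2c$ contains translated copies of the stabilized $kA$ at its two ends together with a full middle; since $kA-kA$ and $kA+kA$ span intervals of the same length, and the deficit of $kA+kA$ equals the sum $C_L+C_R$ of the two stabilized fringe deficits of $kA$, the containment forces $|kA-kA|\geq|kA+kA|$. You instead rewrite $kA+kA=2kA$ and $kA-kA=k(A-A)$, apply Lemma \ref{lem:behaviorkA} a second time to the translated symmetric set $(A-A)+a_m$, and compare the two deficit constants by genus monotonicity of numerical semigroups, using the inclusion $(A-A)+a_m\supseteq A\cup(a_m-A)$ to get $2g(S)\leq 2\min\bigl(g(S_A),g(S_{A'})\bigr)\leq g(S_A)+g(S_{A'})$. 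What your packaging buys: the comparison of constants becomes a clean, $k$-free statement about semigroups; the threshold $N\leq 2(a_m-a_1)/s$ falls out immediately because the span of $A-A$ is twice that of $A$; and you in fact prove the stronger bound $2\min(C_L,C_R)$ for the deficit of $kA-kA$. What it costs is the bookkeeping you flag yourself --- identifying each stabilized fringe deficit with a genus requires the fringe window to pass the Frobenius number and the fringes not to overlap --- but this is implicit in the proof of Lemma \ref{lem:behaviorkA} and can be cited rather than redone. One point worth emphasizing, since it is where a careless version of either argument dies: you must use \emph{both} inclusions $A\subseteq(A-A)+a_m$ and $a_m-A\subseteq(A-A)+a_m$ (equivalently, in the paper's language, copies of $kA$ at \emph{both} ends of $kA-kA$). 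Using only one copy plus the symmetry of $kA-kA$ bounds its deficit by $2C_R$, which can exceed $C_L+C_R$: for $A=\{0,1,3\}$ one has $C_L=0$ (left semigroup $\langle 1,3\rangle=\mathbb{N}$) but $C_R=1$ (right semigroup $\langle 2,3\rangle$), while $kA-kA=[-3k,3k]$ is a full interval. Your $\min$ argument handles this correctly, and is precisely the content hidden in the paper's phrase ``all those fringe elements.''
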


\begin{proof}
Note that $kA\subset kA-kA$. This means that if $cA+cA$
has stable fringes and a full middle, then $2cA-2cA$ will contain
all those fringe elements (and maybe more) as well as the full middle.
Therefore, if we choose $N=2c$, then for any $k\geq N$, $|kA-kA|\geq|kA+kA|$
\end{proof}

Corollary \ref{cor:kgenerationalexists}(3) now follows immediately; in other words, no set can be $k$-generational for all $k$. This significantly improves an earlier result of Nathanson \cite{Na1}, who proved that $kA$ stabilizes by $k\ge a^2m$, where $a$ is the largest element of $A$ and $m$ is the largest gap between elements of $A$.

%%%%%%%%%%%%%%%%%%%%%%%%%%%%%%%%%%%%%%%%%%%%%%%%%%%%%%%%%%%%%%%%%%%%%%%%%%%%%%%%%%
%%%%%%%%%%%%%%%%%%%%%%%%%%%%%%%%%%%%%%%%%%%%%%%%%%%%%%%%%%%%%%%%%%%%%%%%%%%%%%%%%%
\section{Simultaneous Comparison}\label{sec:simulcomparison}

In this section we prove that any ordering for a simultaneous comparison happens.

\begin{proof}[Proof of Theorem \ref{thm:simulcomparison}]
We repeatedly use base expansion. For each $1\leq j\leq n$,
choose $A_{j}$ such that $\left|s_{j}A_{j}-d_{j}A_{j}\right|=\left|sA-dA\right|+1$
for every $s\neq\pm s_{j}$. Next, choose an $m>k\cdot\ \max(\{a;a\in A_{j}\text{ for some }1\leq j\leq n\})$.
Let \bea
A&=&A_{1}+mA_{2}+m^{2}A_{2}+\cdots+\underbrace{m^{\frac{j(j-1)}{2}}A_{j}+\cdots+m^{\frac{j(j-1)}{2}+j-1}A_{j}}_{j\text{ times }}+\cdots+m^{\frac{n(n-1)}{2}+n-1}A_{n}.\nonumber\\ \eea More simply, $A$ is made of $j$ copies of each $A_{j}$. Arguing as before (such as in Lemma \ref{lem:basexpansionlem}), we find
\begin{eqnarray}
\left|s_{j}A-d_{j}A\right| \ =\ \prod_{i}\left|s_{j}A_{i}-d_{j}A_{i}\right|^{i}.\end{eqnarray}

Now, we have that $\left|s_{j}A_{i}-d_{j}A_{i}\right|=\left|s_{\ell}A_{i}-d_{\ell}A_{i}\right|$
whenever $\ell,j\neq i$. Furthermore, we have that $\left|s_{i}A_{i}-d_{i}A_{i}\right|=\left|s_{j}A_{i}-d_{j}A_{i}\right|+1$
whenever $i\neq j$. Therefore, if we choose $j>\ell$, we obtain
\begin{eqnarray}
\left|s_{j}A-d_{j}A\right| & =&\prod_{i}\left|s_{j}A_{i}-d_{j}A_{i}\right|^{i} \nn
 & =&\left|s_{j}A_{j}-d_{j}A_{j}\right|^{j}\cdot\prod_{i\neq j}\left|s_{j}A_{i}-d_{j}A_{i}\right|^{i} \nn
 & =&\left(\left|s_{\ell}A_{j}-d_{\ell}A_{j}\right|+1\right)^{j}\cdot\left(\left|s_{\ell}A_{\ell}-d_{\ell}A_{\ell}\right|-1\right)^{\ell}\prod_{i\neq j,\ell}\left|s_{\ell}A_{i}-d_{\ell}A_{i}\right|^{i} \nn
 & =&\left|s_{\ell}A_{\ell}-d_{\ell}A_{\ell}\right|^{j}\left|s_{\ell}A_{j}-d_{\ell}A_{j}\right|^{\ell}\prod_{i\neq j,\ell}\left|s_{\ell}A_{i}-d_{\ell}A_{i}\right|^{i} \nn
 & >&\left|s_{\ell}A_{\ell}-d_{\ell}A_{\ell}\right|^{\ell}\left|s_{\ell}A_{j}-d_{\ell}A_{j}\right|^{j}\prod_{i\neq j,\ell}\left|s_{\ell}A_{i}-d_{\ell}A_{i}\right|^{i} \nn
 &\ = \ &\prod_{i}\left|s_{\ell}A_{i}-d_{\ell}A_{i}\right|^{i}=\left|s_{\ell}A-d_{\ell}A\right|.
 \end{eqnarray}

Informally, we have chosen the $A_{i}$ such that $\left|s_{i}A_{i}-d_{i}A_{i}\right|$
is larger than all other possible combinations of sums and differences.
Then we made $\left|s_{2}A-d_{2}A\right|>\left|s_{1}A-d_{1}A\right|$
by having more copies of $A_{2}$ than of $A_{1}$. Similarly, we
made $\left|s_{3}A-d_{3}A\right|>\left|s_{2}A-d_{2}A\right|$ by having
more copies of $A_{3}$ than of $A_{2}$. Following this process,
we constructed a set $A$ with the desired properties.

We have found an $A$ such that $\left|s_{n}A-d_{n}A\right|>\cdots>\left|s_{1}A-d_{1}A\right|$, completing the proof.
\end{proof}

\appendix

%%%%%%%%%%%%%%%%%%%%%%%%%%%%%%%%%%%%%%%%%%%%%%%%%%%%%%%%%%%%%%%%%%%%%%%%%%%%%%%%%%
%%%%%%%%%%%%%%%%%%%%%%%%%%%%%%%%%%%%%%%%%%%%%%%%%%%%%%%%%%%%%%%%%%%%%%%%%%%%%%%%%%

\section{Arbitrary Differences}\label{sec:arbitrarydifference}

In this section we prove Theorem \ref{thm:arbitrarydifference}. Let
\begin{eqnarray}\label{eqn: arbitraryA}
A\ =\ L\cup [16km -2m +1, n-(16km-2m+1)] \cup (n-R)
\end{eqnarray}
where
\begin{eqnarray}
L &=& [0, 4m]\cup [5m+1,6m]\cup \{8m\} \nonumber\\
R &=& (L+ m/k)\cup [0, m/k-1].
\end{eqnarray}
Note that the fringes $L,R$ of this $A$ are generalizations of the original fringes in \eqref{eq:LReq}. For example, this new $L$ is obtained from the original $L$ by extending the first gap of the original $L$ to have length $m$. Also, note that this $R$ is $L$ shifted down by $m/k$, with the front filled in; this generalizes the original $R$ in \eqref{eq:LReq}, where $R$ is $L$ shifted down only by $1$.

We modify this $A$ in several steps, each step bringing us closer to the full generality of Theorem \ref{thm:arbitrarydifference}.
We first show that the above $A$ has the property that $|kA+kA| = 2kn+1 - m$ and $|kA-kA| = 2kn+1 - 2m$ so that $|kA+kA| - |kA-kA| = m$.  Note that this fringe only works if $m$ is a multiple of $k$ since $R$ is shifted by $m/k$. In the second step, we fix this to allow $m$ that is not a multiple of $k$ by partially filling in the first gap of $L, R$. In the third step, we construct $A$ such that $|kA+kA| = 2kn+1 - m$ and $|kA-kA| = 2kn+1 - \ell$ for any $\ell \le 2m$ by extending the middle interval $[16km -2m +1, n-(16km-2m+1)]$ of $A$. In the last step, we get the full theorem for general $a,b,c,d$ by changing how much $R$ is shifted from $L$.
\\ \\
\emph{Step 1: m is a multiple of k.}\\

We first prove that if $m$ is a multiple of $k$, the above $A$ has $|kA+kA| = 2kn +1-m,  |kA-kA| = 2kn+1 - 2m$. To find $|kA+kA|, |kA-kA|$, we need to study the fringes of $kA+kA, kA-kA$. We will use Lemma \ref{lem:xLyRproperties}, which says that for any $x,y$ that
 $xL +yR$ is a translation of $L$ with the front filled in. In general, note that if $R$ is shifted down from $L$ by $d$, we have that $xL+yR$ ends at $x(8m) + y(8m + d) = (x+y)(8m) + yd$ and if $x+y$ is fixed, the result depends only on $y$ and $d$.
Hence as in Figure \ref{fig:set4s}, the left fringe $kL+kL$ of $kA+kA$ moves slower than the right fringe $kR+kR$. Therefore the right fringe of $kA+kA$ reaches the middle before the left fringe of $kA+kA$, resulting in some missing elements in the left fringe but no missing elements in the right fringe. By Figure \ref{fig:set2s2d}, the fringes $kL+kR$ of $kA-kA$ each have some missing elements since $kL+kR$ also moves slower than $kR+kR$.

To be precise, the left fringe $kL + kL$ of $kA+kA$ is
\begin{eqnarray}
kL+kL &\ = \ & [0, 16km - 4m ]\cup [16km -3m +1,16km- 2m]\cup \{16km\}\nonumber\\
&=& (L+ 16km - 8m) \cup [0, 16km-8m -1].
\end{eqnarray}
Note by \eqref{eqn: arbitraryA} that the middle of $kA+kA$ on the left side starts at $16km-2m+1$. Therefore,  $kA+kA$ is missing the $m$ elements in $[16km-4m+1, 16km -3m]$ in its left fringe.

The right fringe of $kA+kA$ is $2kn - (kR+kR)$ and so after reflection, we only need to study $kR+kR$, which is
\begin{eqnarray}
kR+kR\ =\  [0, 16km - 2m] \cup [16km-m +1, 16km] \cup \{16km +2m\}.
\end{eqnarray}
Again by \eqref{eqn: arbitraryA} note that the middle of $kA+kA$ on the right side starts at $2kn -(16km-2m+1)$, which is $16km - 2m +1$ after reflection. This covers the missing elements of $kR+kR$ and so $kA+kA$ has no missing elements in its right fringe.

Since the middle of $kA+kA$ is filled in, $kA+kA$ has all elements except for the $m$ missing elements in its left fringe and so $|kA+kA| = 2kn+1 - m$.

Now we need to study the fringes of $kA-kA$. Note that $kA-kA$ is symmetric so the left and right fringes are the same. The left fringe of $kA-kA$ is $kL-k(n-R) = kL +kR - kn$. After translation, we can study $kL+kR$, which is
\begin{eqnarray}
kL + kR\ =\ [0, 16km -3m]\cup [16km -2m+1 , 16km-m]\cup \{16km +m \}.
\end{eqnarray}
After translation, the middle of $kA-kA$ starts on the left side at $16km - 2m +1$ as before.
Therefore, the middle covers the first gap $[16km-m+1,16km+m-1]$  in $kL+kR$  but not the second gap $[16km-3m+1,16km-2m]$, which has $m$ elements.
Therefore, the left fringe of $kA-kA$ has $m$ missing elements. By symmetry, the right fringe of $kA-kA$ also has $m$ missing elements. Since the middle of $kA-kA$ is filled in, $kA-kA$ has all elements except for $2m$ elements and so $|kA-kA| = 2kn+1 - 2m$.

Finally, we note that it is sufficient to take $n$ such that $n - 2(16km-2m+1)> 16m$. We make $n$ large enough so that the middle of $A$ has size at least $16m$, the size of the original fringes $L, R$. In fact, we just need that the middle of $kA+kA, kA-kA$ has enough elements to cover the second gap of the $kL+kL, kR+kR$, and $kL+kR$.
\\ \\
\emph{Step 2: m is not a multiple of k.}
\\

To do the case when $m$ is not a multiple of $k$, we use the same fringes as before but partially fill in their gaps.
Let $m'$ be the smallest multiple of $k$ that is greater than or equal to $m$.
By \eqref{eqn: arbitraryA} and Step 1, we can construct $A'$ such that $|kA'+kA'| = 2kn+1 - m'$ and $|kA'-kA'| = 2kn+1 - 2m'$.
That is, the left fringe of $A'$ is
\begin{eqnarray}
L'\ =\ [0, 4m']\cup [5m'+1,6m']\cup \{8m'\}
\end{eqnarray}
so that $L'$ is defined like the original $L$ but for $m'$ instead of $m$. Now we note that since the middle of $A'$ starts at $16km' - 2m' + 1$, the first gap of $L'$ accounts for all the missing elements of $kL' + kL'$ and $kL'+ kR'$.
In fact, a copy of $L' \cap [4m' + 1, 5m']$ appears identically in the left fringe of $kA'+kA'$ and $kA'-kA'$.
Therefore, we can fill in the first $m'-m$ elements of the first gap of $L'$ by considering
\begin{eqnarray}
L''\ =\ L' \cup [4m'+1,4m'+(m'-m)].
\end{eqnarray}
and do the same to construct $R''$ from $R'$. Then $kL'' + kL''$ will have only $m$ missing elements since $kL' + kL'$ has $m'$ missing elements and we filled in $m'-m$ elements.  Also note that $kR'' +  kR''$ has no missing elements since $kR' + kR'$ did not have any missing elements. Thus, if we construct $A$ from $L''$ and $R''$, we have $|kA+kA| = 2n+1 - m$. Note that for this construction, we can fill in
\emph{any} $m'-m$ elements of the first gap of $L'$, not necessarily the first $m'-m$ elements.

Similarly, $kL''+kR''$ now misses only $m$ elements since it also has a copy of $L'' \cap [4m' + 1, 5m']$. Therefore $kA-kA$ has $m$ missing elements in each fringe and so $|kA-kA| = 2kn+1 - 2m$.
\\ \\
\emph{Step 3: Arbitrary $m, \ell \le 2m$.}
\\

Now we further modify $A$ so that for any $m$ and $\ell \le m$, we have $|kA+kA|= kn+1-m$ and $|kA-kA|= kn+1 -2\ell$. Note that again we must do the cases when $m$ is multiple of $k$ and when $m$ is not a multiple separately. However, we only do the case where $m$ is a multiple of $k$ since from Step 2, it is clear how to extend to other case.

In particular, we will modify $A$ by extending the middle section in both directions by $m-\ell$. Therefore the middle of $kA+kA$ now starts at
$16km - 2m +1 - (m-\ell)$. Recall that the missing elements in $kL +kL$ occur only from the first gap $[16km-4m+1, 16km -3m]$. Since $\ell \ge 0$, we have $16km - 2m + 1-(m-\ell) \ge 16km - 3m +1$ and so $kL +kL$ is still missing $m$ elements. As before, $kR+kR$ has no missing elements and so we still have$|kA+kA| = 2kn +1 -m$.

On the other hand, $kL+kR$ has fewer missing elements than it usually would. Note that now the middle of $kA - kA$ also starts at $16km - 2m +1 - (m-\ell) = 16km - 3m + \ell +1$.
Since the missing elements in $kL +kR$ occur only from the first gap $[16km-3m+1, 16km - 2m]$ of $kL+kR$, then $kL + kR$ has only the missing $\ell$ elements $[16km-3m+1, 16km - 3m + \ell]$. Therefore, we get that $kA-kA$ is missing only $\ell$ elements in each fringe and so $|kA-kA| = 2kn+1 -2\ell$.

Note that we cannot do better than having $|kA-kA| = 2kn +1 -2\ell$ with $\ell \le m$ with this approach. Shortening the middle does not help since although it increases the number of missing elements in $kA-kA$, it also increases the number of missing elements in $kA+kA$.
\\ \\
\emph{Step 4: Arbitrary $a,b,c,d$.}
\\

Finally, we modify $A$ to prove the desired theorem for arbitrary $a,b,c,d$. In particular, we will modify $A$ by changing how much $R$ is shifted from $L$. This changes the speed at which the right fringe approaches the middle. We adjust the speed so that the right fringe of $aA-bA$ has no missing elements while all the other fringes still have some missing elements.

We again make some simplifying assumptions. We will only construct $A$ such that $|aA-bA| = qn + 1- m$ and $|cA- dA| = qn +1 - 2m$ since we can use the methods from Step 3 to extend to the case with
$|aA-bA| = qn + 1- m, |cA-dA| = qn +1 - \ell$, where $\ell \le 2m$.
Since $|aA-bA| =|bA-aA|$, we can assume $a > b$ and $c> d$. Furthermore, since $a+b= c+d =q$ and $a$ is the maximal element, we have that $a>c$ and $b<d$.
We first assume that $c\ne d$ and then discuss how to do case when $c=d$; note that in the case $c=d$, we must have $\ell$ be even.
 We must also break up the proof into the case when $c-d\le d-b$ and when $c-d > d-b$. We will only do the case when $c-d\le d-b$ and then discuss how to do the other case. Finally, we must consider separately the case when $m$ is a multiple of $c-d$ and when $m$ is not; we will only do the former since the latter follows as in Step 2.

We now construct $A$ such that $|aA-bA| = qn + 1- m$ and $|cA- dA| = qn +1 - 2m$, with $c \ne d$ and $m$ a multiple of $c-d$.

We first let $\Delta = m/(c-d)$ and
\begin{eqnarray}
&& L\ =\ [0, 2\Delta (a-b)]\cup [2\Delta(a-b) + \Delta (c-d) +1, 3\Delta(a-b)]\cup\{4\Delta(a-b)\}\nonumber\\
&& R\ =\ (L + \Delta) \cup [0, \Delta-1].
\end{eqnarray}
These fringes are similar to the fringes in \eqref{eqn: arbitraryA} except that the middle block of $L,R$ has a different size and $R$ is shifted from $L$ by a different amount. Also let
\begin{equation}
 A\ =\  L \cup
 [ 4\Delta(a-b)(a+b) + \Delta a - 2\Delta( a-b)+1, n - (4\Delta(a-b)(a+b) + \Delta a - 2\Delta( a-b)+1)]
  \cup (n- R).
\end{equation}
The middle is chosen to start at
$4\Delta(a-b)(a+b) + \Delta a - 2\Delta( a-b)+1$, which is 1 after the end of the first block of $bL+aR$,  the right fringe of $aA-bA$.

We first study $aA-bA$. The left fringe of $aA-bA$ is $aL -(b(n-R))$, which is $aL +bR$ after translation. The maximum element of $aL +bR$ is
\begin{eqnarray}
4\Delta(a-b)a + (4\Delta (a-b)+\Delta)b\ =\ 4\Delta(a-b)(a+b) + \Delta b
\end{eqnarray}
and the pattern to the left of the maximum element is exactly the same as in $L$ and $R$. That is,
\begin{eqnarray}
&&aL+bR  \nonumber\\
&&\ = \ [0, 4\Delta(a-b)(a+b) + \Delta b - 2\Delta( a-b)] \nonumber\\
&&\ \ \ \cup [ 4\Delta(a-b)(a+b) + \Delta b - 2\Delta(a-b) + \Delta (c-d)+1, 4\Delta(a-b)(a+b)+\Delta b - \Delta(a-b)]\nonumber\\
&&\ \ \ \cup  \{ 4\Delta(a-b)(a+b)+ \Delta b\}.
\end{eqnarray}
Since the middle of $A$ starts at
\begin{equation}
4\Delta(a-b)(a+b) + \Delta a - 2\Delta( a-b)+1\ =\ 4\Delta(a-b)(a+b) + \Delta b - \Delta(a-b)+1,
\end{equation}
we see that $aL+bR$ is missing the $\Delta (c-d) = m$ elements
\begin{equation}
[4\Delta(a-b)(a+b) + \Delta b - 2\Delta( a-b)+1,  4\Delta(a-b)(a+b) + \Delta b - 2\Delta(a-b) + \Delta (c-d)],
\end{equation}
which are all the elements in its first gap.

Now we must consider the right fringe $a(n-R) - bL$ of $aA - bA$, which $bL +aR$ after reflection. Note that
\begin{eqnarray}
&&bL+aR\ =\ aL +bR + \Delta (a-b) \nonumber\\
&&\ = \ [0, 4\Delta(a-b)(a+b) + \Delta a - 2\Delta( a-b)]\nonumber\\
&&\ \ \ \cup [ 4\Delta(a-b)(a+b) + \Delta a - 2\Delta(a-b) + \Delta (c-d)+1, 4\Delta(a-b)(a+b)+\Delta a - \Delta(a-b)]\nonumber\\
&&\ \ \ \cup\{ 4\Delta(a-b)(a+b)+ \Delta a\},
\end{eqnarray}
with the first equality since $R$ is $L$ shifted down by $\Delta$. Note that $bL + aR$ is not missing any elements since the middle of $A$ starts at
\begin{eqnarray}
4\Delta(a-b)(a+b) + \Delta a - 2\Delta( a-b)+1,
\end{eqnarray}
which is exactly where the first gap of $bL+aR$ starts. Therefore $|aA-bA| = 2kn+1 - m$ since $aA-bB$ is missing only $m$ elements in its left fringe.

Now we will consider $cA-dA$. Its left fringe is $cL - d(n-R)$, which is $cL+dR$ after translation. Then as before,
\begin{eqnarray}
&&cL+dR\  =\ aL +bR + \Delta (d-b) \nonumber\\
&&\ =\ [0, 4\Delta(a-b)(a+b) + \Delta d - 2\Delta( a-b)] \nonumber\\
&&\ \ \ \cup[ 4\Delta(a-b)(a+b) + \Delta d - 2\Delta(a-b) + \Delta (c-d)+1, 4\Delta(a-b)(a+b)+\Delta d - \Delta(a-b)]\nonumber\\
&&\ \ \ \cup\{ 4\Delta(a-b)(a+b)+ \Delta d\}.
\end{eqnarray}
Then $cL+dR$ is only missing the $\Delta (c-d) =m$ elements
\begin{equation}
[4\Delta(a-b)(a+b) + \Delta d - 2\Delta( a-b)+1,  4\Delta(a-b)(a+b) + \Delta d - 2\Delta(a-b) + \Delta (c-d)],
\end{equation}
which are all the elements in its first gap. This is because the middle of $A$ starts in the middle of the second block of $cL+dR$ since
\begin{equation}
4\Delta(a-b)(a+b) + \Delta d -2\Delta(a-b) + \Delta(c-d) +1\ \le\ 4\Delta(a-b)(a+b) + \Delta a -2\Delta(a-b)
\end{equation}
as $c< a$ and
\begin{equation}
4\Delta(a-b)(a+b) + \Delta a -2\Delta(a-b) \ \le\ 4\Delta(a-b)(a+b)+\Delta d - \Delta(a-b)
\end{equation}
as $b > d$.

Similarly, the right fringe of $cA-dA$ is $c(n-R) - dL$, which is $dL + cR$. Then
\begin{eqnarray}
&&dL+cR \ =\ aL +bR + \Delta (c-b)\nonumber\\
&&\ = \ [0, 4\Delta(a-b)(a+b) + \Delta c - 2\Delta( a-b)]\nonumber\\
&&\ \ \ \cup[ 4\Delta(a-b)(a+b) + \Delta c - 2\Delta(a-b) + \Delta (c-d)+1, 4\Delta(a-b)(a+b)+\Delta c - \Delta(a-b)]\nonumber\\
&&\ \ \ \cup\{ 4\Delta(a-b)(a+b)+ \Delta c\},
\end{eqnarray}
and as before, $dL +cR$ is only missing the $m$ elements
\begin{equation}
[4\Delta(a-b)(a+b) + \Delta c - 2\Delta( a-b)+1,  4\Delta(a-b)(a+b) + \Delta c - 2\Delta(a-b) + \Delta (c-d)],
\end{equation}
which are all the elements in its first gap. This is because the middle of $A$ starts in the middle of the second block of $dL+cR$ since
\begin{equation}
4\Delta(a-b)(a+b) + \Delta c -2\Delta(a-b) + \Delta(c-d) +1\ \le\ 4\Delta(a-b)(a+b) + \Delta a -2\Delta(a-b)
\end{equation}
and
\begin{equation}
 4\Delta(a-b)(a+b) + \Delta a -2\Delta(a-b)\ \le\ 4\Delta(a-b)(a+b)+\Delta c - \Delta(a-b).
\end{equation}
To verify the first inequality, we note that $2c-d < a$ holds since $a+b = c+d$ and $c-b < d-b$ in this case.
The second inequality follows from $b \le c$.
Therefore, $cA-dA$ is missing $m$ elements in each fringe and so $|cA-dA| = 2kn+1 - 2m$.
\\

To do the case $c-d > d-b$, we need to change the fringes slightly. However, the only real difference occurs when we extend the middle to get $|cA-dA|= 2kn+1 - \ell$, where $\ell \le 2m$, as in Step 3. We do this by first extending the middle one element at a time (to decrease $\ell$ one element at a time). However, at a certain point we need to extend the middle by adding a whole block;  at this point extending one element does not change the value of $|cA-dA|$ and so we just extend by a whole block. Afterwards, we continue extending the middle one element at a time as before.

Finally, we note that the case when $c=d$ is similar to the result achieved in Step 3, except that now the left fringe $aL + bR$ of $aA-bA$ is closer to the middle; therefore we need to make the middle shorter so that the middle misses the first gap in the left fringe of $aL+bR$. This completes the proof of Theorem \ref{thm:arbitrarydifference}. \hfill $\Box$

%%%%%%%%%%%%%%%%%%%%%%%%%%%%%%%%%%%%%%%%%%%%%%%%%%%%%%%%%%%%%%%%%%%%%%%%%%%%%%%%%%
%%%%%%%%%%%%%%%%%%%%%%%%%%%%%%%%%%%%%%%%%%%%%%%%%%%%%%%%%%%%%%%%%%%%%%%%%%%%%%%%%%

\ \\

\begin{thebibliography}{KonS8}

%\bibitem{} % '2nd argument contains the widest acronym'

\bibitem[FP]{FP}
G. A. Freiman and V. P. Pigarev, \emph{The relation between the invariants R and T}, Number theoretic studies in the Markov spectrum and in the structural theory of set addition (Russian),
Kalinin. Gos. Univ., Moscow, 1973, 172--174.

\bibitem[He]{He}
P. V. Hegarty, \emph{Some explicit constructions of sets with more sums than differences} (2007), Acta Arithmetica \textbf{130} (2007), no. 1, 61--77.

\bibitem[HM1]{HM1}
P. V. Hegarty and S. J. Miller, \emph{When almost all sets are difference dominated}, Random Structures and Algorithms \textbf{35} (2009), no. 1, 118--136. \texttt{http://arxiv.org/abs/0707.3417}

\bibitem[HM2]{HM2}
P. V. Hegarty and S. J. Miller, Appendix 2 of \emph{Explicit constructions of infinite families of MSTD sets} (by S. J. Miller and D. Scheinerman), Additive Number Theory: Festschrift In Honor of the Sixtieth Birthday of Melvyn B. Nathanson (David Chudnovsky and Gregory Chudnovsky, editors), Springer-Verlag, 2010.

\bibitem[ILMZ]{ILMZ}
G. Iyer, O. Lazarev, S. J. Miller and L. Zhang, \emph{Finding and Counting MSTD sets}, preprint (2011). \texttt{http://arxiv.org/abs/1107.2719}.

\bibitem[Ma]{Ma}
J. Marica, \emph{On a conjecture of Conway}, Canad. Math. Bull. \textbf{12} (1969), 233--234.

\bibitem[MO]{MO}
G. Martin and K. O'Bryant, \emph{Many sets have more sums than differences}, in Additive Combinatorics, CRM Proc. Lecture Notes, vol. 43, Amer. Math. Soc., Providence, RI, 2007, pp. 287�305.

\bibitem[MOS]{MOS}
S. J. Miller, B. Orosz and D. Scheinerman, \emph{Explicit constructions of infinite families of MSTD sets}, Journal of Number Theory \textbf{130} (2010) 1221--1233.

\bibitem[MPR]{MPR}
S. J. Miller, S. Pegado and S. L. Robinson, \emph{Explicit constructions of infinite families of generalized MSTD sets}, preprint.

\bibitem[Na1]{Na1}
M. B. Nathanson, \emph{Sums of finite sets of integers},
The American Mathematical Monthly, Vol. $79$, No. $9$ (Nov., 1972), pp. 1010-1012.

\bibitem[Na2]{Na2}
M. Nathanson, \emph{Additive Number Theory: The Classical Bases},
Graduate Texts in Mathematics, Springer-Verlag, New York, $1996$.

\bibitem[Na3]{Na3}
M. B. Nathanson, \emph{Problems in additive number theory, 1},
Additive combinatorics, 263--270, CRM Proc. Lecture Notes \textbf{43},
Amer. Math. Soc., Providence, RI, 2007.

\bibitem[Na4]{Na4}
M. B. Nathanson, \emph{Sets with more sums than differences},
Integers : Electronic Journal of Combinatorial Number Theory \textbf{7} (2007), Paper A5 (24pp).


\bibitem[Ru1]{Ru1}
I. Z. Ruzsa, \emph{On the cardinality of $A + A$ and $A - A$}, Combinatorics year (Keszthely, 1976), vol. 18, Coll. Math. Soc. J. Bolyai, North-Holland-Bolyai T$\grave{{\rm a}}$rsulat, 1978, 933--938.

\bibitem[Ru2]{Ru2}
I. Z. Ruzsa, \emph{Sets of sums and differences}, S$\acute{{\rm e}}$minaire de Th$\acute{{\rm e}}$orie des Nombres de Paris 1982-1983 (Boston), Birkh$\ddot{{\r a}}$user, 1984, 267--273.

\bibitem[Ru3]{Ru3}
I. Z. Ruzsa, \emph{On the number of sums and differences}, Acta Math. Sci. Hungar. \textbf{59} (1992), 439--447.

%\bibitem[Sc]{Sc}
%M. F. Schilling, \emph{The longest run of heads}, The College Mathematics Journal \textbf{21} (1990), no. 3, 196--207.

%\bibitem[TW]{TW}
%R. Taylor and A. Wiles, \emph{Ring-theoretic properties of certain
%Hecke algebras}, Ann. Math. \textbf{141} (1995), 553--572.

%\bibitem[Vu1]{Vu1}
%V. H. Vu, \emph{New bounds on nearly perfect matchings of
%hypergraphs: Higher codegrees do help}, Random Structures and
%Algorithms \textbf{17} (2000), 29--63.

%\bibitem[Vu2]{Vu2}
%V. H. Vu, \emph{Concentration of non-Lipschitz functions and
%Applications}, Random Structures and Algorithms \textbf{20} (2002),
%no. 3, 262-316.

\bibitem[Zh1]{Zh1}
Y. Zhao, \emph{Constructing MSTD Sets Using Bidirectional Ballot Sequences}, Journal of Number Theory \textbf{130} (2010), no. 5, 1212--1220.

\bibitem[Zh2]{Zh2}
Y. Zhao, \emph{Sets Characterized by Missing Sums and Differences}, Journal of Number Theory \textbf{131} (2011), 2107--2134.

\end{thebibliography}
\end{document}